\newtheorem{theorem}{Theorem}[section]
\theoremstyle{remark}
\newcommand{\beq}{\begin{equation}}
\newcommand{\eeq}{\end{equation}}
\newcommand{\ben}{\begin{eqnarray}}
\newcommand{\een}{\end{eqnarray}}
\newcommand{\beno}{\begin{eqnarray*}}
\newcommand{\eeno}{\end{eqnarray*}}
\def\na{\nabla}
\newcommand{\fr}{\frac}
\newcommand{\pa}{\partial}
\newcommand{\la}{\label}
\newcommand{\ba}{\begin{array}{l}}
\newcommand{\ea}{\end{array}}
\newcommand{\RR}{\mathbb{R}}
\begin{document}

\title[The Oldroyd-B model]{High Reynolds number and high Weissenberg number Oldroyd-B model with dissipation}

\author[Peter Constantin, Jiahong Wu, Jiefeng Zhao and Yi Zhu]
{Peter Constantin$^{1}$,  Jiahong Wu$^{2}$, Jiefeng Zhao$^{3}$ and Yi Zhu$^{4}$}

\address{$^1$ 
	Department of Mathematics, 
	Princeton University,  
	Fine Hall, Washington Road,
	Princeton, NJ 08544-1000,  United States}

\email{const@math.princeton.edu}

\address{$^2$ Department of Mathematics, Oklahoma State University, Stillwater, OK 74078, United States}

\email{jiahong.wu@okstate.edu}

\address{$^3$ School of Mathematics and Information Science, 
	Henan Polytechnic University, Jiaozuo 454003, P. R. China }

\email{zhaojiefeng003@hpu.edu.cn}

\address{$^4$ School of Science, East China University of Science and Technology, Shanghai 200237, P. R. China}

\email{zhuyim@ecust.edu.cn}

\vskip .2in
\begin{abstract}
We give a small data global well-posedness result
for an incompressible Oldroyd-B model with wave number dissipation in the equation of stress tensor. The result is uniform in solvent Reynolds numbers, and requires only fractional wave-number dependent dissipation $(-\Delta)^{\beta}$, $\beta \ge \fr{1}{2}$ in the added stress.
\end{abstract}
\maketitle

\vskip .1in
\section{Introduction}
\setcounter{equation}{0}
%\label{intro}
A class of models of complex fluids is based on  an equation for a solvent coupled with a kinetic description of particles suspended in it. In the case of dilute suspensions weakly confined by a Hookean spring potential, a rigorously established exact closure for the moments in the kinetic equation of this Navier-Stokes-Fokker-Planck system yields the Oldroyd-B system (\cite{La}). 
After non-dimensionalization, the coupled Oldroyd-B system is
\begin{equation}\la{nsigma}
\begin{cases}
\partial_t u + u\cdot\nabla u + \nabla p - \frac{1}{Re}\Delta u  =  K\nabla\cdot \sigma,\\
\partial_t\sigma + u\cdot\nabla \sigma = (\nabla u)\sigma + \sigma (\nabla u)^* - \frac{1}{We}(\sigma - \mathbb I),\\
\na\cdot u = 0,
\end{cases}
\end{equation}
where $\sigma$ is the conformation tensor, $\sigma = \mathbb E (m\otimes m)$ with $m$ the end-to-end vector in $\RR^d$ and $\mathbb E$  the average with respect to the local distribution, $u$ is the solvent velocity, $p$ is the pressure, $Re$ is the Reynolds number of the solvent, $We$ is  the Weissenberg number, $K = \fr{1}{\gamma Re We}$ and $\gamma$  is the ratio of solvent viscosity to polymeric viscosity. In the limit of zero Reynolds number, the system (\ref{nsigma}) reduces further and it becomes a nonlinear evolution for $\sigma$
\beq
\pa_t \sigma + u \cdot\na \sigma = (\na u)\sigma + \sigma(\na u)^* -\fr{1}{We}(\sigma - \mathbb I)
\la{sigmastokes}
\eeq
where $u$ is obtained from $\sigma$ by solving the Stokes system
\beq
-\Delta u +\na p = \fr{1}{\gamma We}\na\cdot \sigma, \quad\quad \na\cdot u = 0.
\la{stok}
\eeq
The system (\ref{sigmastokes}) with (\ref{stok}) is an example of an equation which might develop finite time singularities for large data, even in $\RR^2$. The forcing in the right hand side of (\ref{stok}) or in the right hand side of the momentum equation of (\ref{nsigma}) depends only on the added stress
\beq
\tau = \sigma - \mathbb I,
\la{add}
\eeq
because any multiple of the identitity matrix added to $\sigma$ is balanced by a pressure, even if the factor is a function of space and time. For small added stress it is known (\cite{Constantin1}) that the system (\ref{sigmastokes}), (\ref{stok}) has global solutions. The problem of global existence of smooth solutions for large data is open and challenging. The large Weissenberg number problem is challenging both numerically and analytically. If we replace the damping term by a wave-number dependent dissipative term we obtain an equation for the conformation stress
\beq
\pa_t \sigma + u \cdot\na \sigma = (\na u)\sigma + \sigma(\na u)^* -\eta P(D)(\sigma- \mathbb I)
\la{sigds}
\eeq
with $P(D)$ a dissipative differential operator and $\eta$ a positive number.
If a small diffusive term ($P(D) = -\Delta$ in (\ref{sigds})) is added to the equation for $\sigma$ coupled with (\ref{stok}) then global existence of smooth solutions with arbitrary data has been established (\cite{Constantin}) in $d=2$. For the small data problem one can discuss a less stringent wave-number dependence, and allow the solvent Reynolds number to be arbitrarily large.

In this paper we consider an Oldroyd-B model
\begin{equation}\label{OBA}
\begin{cases}
\partial_t u+u\cdot\nabla u+\nabla p=\nabla\cdot\tau,\ \ \ \ x\in\mathbb{R}^d,\ \ t>0,\\
\partial_t \tau+u\cdot\nabla\tau+\eta(-\Delta)^\beta\tau+Q(\tau,\nabla u)=D(u),  \\
\nabla\cdot u=0,\\
u(0,x)=u_0(x),\ \ \tau(0,x)=\tau_0(x),
\end{cases}
\end{equation}
where $0\leqslant\beta\leqslant1$ and $\eta>0$ are real parameters, $u=u(x,t)$ represents
the velocity field of the fluid, $p=p(x,t)$ the pressure and
$\tau=\tau(x,t)$ the non-Newtonian added stress tensor (see (\ref{add}))
(a $d$-by-$d$ symmetric matrix). Here $D(u)$ is the symmetric part of the velocity gradient, 
$$
D(u)=\frac{1}{2}\big(\nabla u+(\nabla
u)^\top\big)
$$
and the bilinear term $Q$ is taken to be
$$Q(\tau,\nabla u)=\tau W(u)-W(u)\tau-b\big(D(u)\tau+\tau D(u)\big)$$
with $b\in[-1,1]$ a constant and  $W(u)$ the skew-symmetric part of the $\nabla u$,
$$
W(u)=\frac{1}{2}\big(\nabla u-(\nabla u)^\top\big).
$$
The fractional Laplacian operator
$(-\Delta)^{\gamma}$ is defined through the Fourier transform, 
$$
\widehat{(-\Delta)^{\gamma} f}(\xi) = |\xi|^{2\gamma} \widehat{f}(\xi).
$$
For notational convenience, we also write $\Lambda=(-\Delta)^{\frac12}$ denoting the Zygmund operator. Background information on the Oldroyd-B model can be found in many references (see, e.g., \cite{R.B.Bird,J.G. Oldroyd}).

\vskip .1in
Our main result is the small data global well-posedness of \eqref{OBA} with any $\frac{1}{2}\leqslant \beta\leqslant1$. There is no damping mechanism in
the equation of $\tau$ in \eqref{OBA}: strictly speaking the Weissenberg number is infinite, but wave-number dependent dissipation is added. Whether or not \eqref{OBA} with $0\le \beta <\frac12$ possesses small data global well-posedness remains an open problem.

\begin{theorem} \label{main}
Consider \eqref{OBA} with $\frac{1}{2}\leqslant \beta\leqslant1$. Let $d=2,3$ and $s>1+\frac{d}{2}.$ Assume $(u_0,\tau_0)\in H^s(\mathbb{R}^d)$, $\nabla\cdot u_0=0$, and $\tau_0$ is symmetric. Then there
exists a small constant $\varepsilon>0$ such that, if
\begin{eqnarray*}
\|u_0\|_{H^s}+\|\tau_0\|_{H^s}\leqslant \varepsilon,
\end{eqnarray*}
then \eqref{OBA} has a unique global solution $(u,b)$ satisfying, for some constant $C>0$ and all $t>0$.
\begin{eqnarray*}
\|u\|_{H^s}+\|\tau\|_{H^s}\leqslant C\varepsilon.
\end{eqnarray*}
\end{theorem}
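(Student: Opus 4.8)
The plan is to prove global well-posedness via a continuation argument built on an a priori energy estimate in $H^s$, exploiting the fractional dissipation $\eta(-\Delta)^\beta\tau$ with $\beta\ge\frac12$ as the sole smoothing mechanism and relying on the smallness of the data. Local well-posedness in $H^s$ with $s>1+\frac{d}{2}$ follows from standard fixed-point/parabolic-regularization arguments, so the heart of the matter is to show that the $H^s$ norm of $(u,\tau)$ stays below $C\eps$ on any time interval where the solution exists.

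First I would set up the basic energy identity. Applying $\Lambda^s$ to both equations, pairing with $\Lambda^s u$ and $\Lambda^s\tau$ in $L^2$, and summing, the key cancellation is between the coupling terms $\nabla\cdot\tau$ in the momentum equation and $D(u)$ in the stress equation: because $\int \Lambda^s(\nabla\cdot\tau)\cdot\Lambda^s u = -\int \Lambda^s\tau : \Lambda^s\nabla u = -\int\Lambda^s\tau:\Lambda^s D(u)$ (using the symmetry of $\tau$), these linear exchange terms cancel exactly, leaving
\begin{equation*}
\frac{1}{2}\frac{d}{dt}\Big(\|u\|_{H^s}^2+\|\tau\|_{H^s}^2\Big)+\eta\|\Lambda^\beta\tau\|_{H^s}^2 = -\mathcal N,
\end{equation*}
where $\mathcal N$ collects the transport commutators from $u\cdot\nabla u$ and $u\cdot\nabla\tau$ together with the contributions of the bilinear term $Q(\tau,\nabla u)$. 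The transport terms are handled by the usual commutator estimate, so that after integrating by parts using $\nabla\cdot u=0$, $\mathcal N$ is bounded by $C\|\nabla u\|_{L^\infty}(\|u\|_{H^s}^2+\|\tau\|_{H^s}^2)$ plus the $Q$-terms, each of which is a product of $\nabla u$ and $\tau$ at order $s$ and is controlled by $C\|(u,\tau)\|_{H^s}\,\|\nabla u\|_{L^\infty}\,\|\tau\|_{H^s}$ via the Moser/Kato-Ponce product and commutator inequalities.

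The main obstacle is that the bound above contains $\|\nabla u\|_{L^\infty}$, and since there is no dissipation on $u$ (the momentum equation is Euler-type with a stress forcing), I cannot directly absorb this term using velocity smoothing. The resolution is to extract the missing coercivity from $\tau$: the dissipation gives control of $\|\Lambda^\beta\tau\|_{H^s}$, hence of $\|\nabla u\|_{L^\infty}$ through the forcing $\nabla\cdot\tau$ in the momentum equation, but only at the critical rate $\beta\ge\frac12$, which is exactly why $\beta<\frac12$ is excluded. Concretely, since $s>1+\frac{d}{2}$ one has $\|\nabla u\|_{L^\infty}\lesssim\|u\|_{H^s}$, and the interaction structure must be arranged so that every instance of $\|\nabla u\|_{L^\infty}$ multiplying an $H^s$ norm is accompanied either by a factor of $\|\tau\|_{H^s}$ that can absorb a $\Lambda^\beta$, or by a smallness factor. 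I would therefore introduce an auxiliary bound linking $\|u\|_{H^s}$ to $\|\tau\|_{H^s}$ through the equation, so that the full nonlinear remainder is bounded by $C\|(u,\tau)\|_{H^s}\big(\|(u,\tau)\|_{H^s}^2+\eta\|\Lambda^\beta\tau\|_{H^s}^2\big)$.

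Finally I would close the estimate by the standard bootstrap. Writing $E(t)=\|u\|_{H^s}^2+\|\tau\|_{H^s}^2$ and $D(t)=\eta\|\Lambda^\beta\tau\|_{H^s}^2$, the differential inequality takes the form $\frac{d}{dt}E+D\le C\sqrt{E}\,(E+D)$. Under the a priori assumption $\sup_{[0,T]}\sqrt{E}\le 2C\eps\le\frac{1}{2C}$, the term $C\sqrt{E}\,D$ is absorbed into the dissipation on the left, giving $\frac{d}{dt}E+\frac12 D\le C\sqrt{E}\,E$, and a further use of smallness yields $\frac{d}{dt}E\le 0$, so $E(t)\le E(0)\le\eps^2$ for all $t$. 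This improves the a priori bound (recovering $\sqrt{E}\le\eps<2C\eps$), and a standard continuity argument upgrades the local solution to a global one with $\|u\|_{H^s}+\|\tau\|_{H^s}\le C\eps$ uniformly in $t$; uniqueness follows from an energy estimate on the difference of two solutions at a lower regularity level. I expect the delicate point throughout to be the bookkeeping that guarantees $\beta=\frac12$ suffices, i.e.\ that no term requires strictly more than half a derivative of dissipation to be controlled.
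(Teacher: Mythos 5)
There is a genuine gap, and it sits exactly at the point you flag as ``the main obstacle.'' Your final differential inequality has the form $\frac{d}{dt}E+D\le C\sqrt{E}\,(E+D)$, where the dissipation $D=\eta\|\Lambda^\beta\tau\|_{H^s}^2$ involves \emph{only} $\tau$. After absorbing $C\sqrt{E}\,D$ into the left side you are left with $\frac{d}{dt}E\le C\sqrt{E}\,E=CE^{3/2}$, and your claim that ``a further use of smallness yields $\frac{d}{dt}E\le 0$'' is false: there is nothing on the left to absorb the cubic term $CE^{3/2}$ into, since it contains no factor of $\|\Lambda^\beta\tau\|_{H^s}$. This term is unavoidable in your setup --- it comes from the Euler commutator $-(\Lambda^s(u\cdot\nabla u)-u\cdot\nabla\Lambda^s u,\Lambda^s u)\lesssim\|\nabla u\|_{L^\infty}\|\Lambda^s u\|_{L^2}^2\sim\|u\|_{H^s}^3$, a pure velocity self-interaction with no $\tau$ in it. The Riccati inequality $\dot E\le CE^{3/2}$ only controls $E$ on a time interval of length $\sim E(0)^{-1/2}$, so your bootstrap hypothesis $\sup_{[0,T]}\sqrt E\le 2C\eps$ cannot be propagated to all of $[0,\infty)$; indeed the $H^s$ norm of an Euler-type velocity can grow without bound, which is why no argument using only the $\tau$-dissipation can close. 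Your sentence about extracting control of $\|\nabla u\|_{L^\infty}$ ``through the forcing $\nabla\cdot\tau$'' gestures at the right phenomenon but is never made into an estimate, and the ``auxiliary bound linking $\|u\|_{H^s}$ to $\|\tau\|_{H^s}$'' is left unspecified precisely because, in the form you state the remainder bound $C\sqrt{E}(E+D)$, no such bound can exist.

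The missing idea, which is the heart of the paper's proof, is to add a cross term to the energy: one works with the Lyapunov functional $L(t)=\|u\|_{H^s}^2+\|\tau\|_{H^s}^2+2k(u,\nabla\cdot\tau)_{H^{s-\beta}}$. Differentiating the cross term and using the equations, the pairing of $\frac12\Delta u$ (coming from $\mathbb P\nabla\cdot D(u)$) with $u$ produces a \emph{genuine velocity dissipation} $\frac{k}{2}\|\nabla u\|_{H^{s-\beta}}^2$, at the cost of a term $-k\|\mathbb P\nabla\cdot\tau\|_{H^{s-\beta}}^2$ which is absorbed by $\eta\|\Lambda^\beta\tau\|_{H^s}^2$ exactly when $\beta\ge\frac12$ (and the cross term is comparable to the $H^s$ energy exactly when $\beta\ge\frac12$, via $\|\tau\|_{H^{s+1-2\beta}}\le c\|\tau\|_{H^s}$). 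This reflects the wave structure of the linearization, $\partial_{tt}u+\eta(-\Delta)^\beta\partial_t u-\frac12\Delta u=0$. With both dissipations available, every nonlinear term --- in particular the Euler commutator, bounded by $\|u\|_{H^s}\|\nabla u\|_{H^{s-\beta}}^2$ using $\beta\le 1$ --- is dominated by $\sqrt E$ times the \emph{full} dissipation, and the estimate closes as $E(t)\le E(0)+CE^{3/2}(t)$ with the dissipation included inside $E$. A secondary gap: your one-line uniqueness claim also fails for $\frac12\le\beta<1$, because the difference estimate contains $Q(\tau_1,\nabla\delta u)$ with a full gradient of $\delta u$ and no velocity dissipation to pay for it; the paper must redo the cross-term construction for the difference system to generate $\|\nabla\delta u\|_{L^2}^2$ (only the case $\beta=1$ admits the direct argument you describe).
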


\vskip .1in
The small data global well-posedness for an Oldroyd-B model
without dissipation in the velocity equation has previously been examined by T. Elgindi and F. Rousset in the 2D case \cite{T.M.F} and by T. Elgindi and J. Liu for the 3D case \cite{T.M.J}. They focus on the following
Oldroyd-B model without velocity dissipation,
\begin{equation}\label{old}
\begin{cases}
u_t+u\cdot\nabla u+\nabla p=\nabla\cdot\tau,\ \ \ \ x\in\mathbb{R}^d,\ \ t>0,\\
\tau_t+u\cdot\nabla\tau+Q(\tau,\nabla u) -\eta \Delta \tau + a \tau=D(u),  \\
\nabla\cdot u=0,
\end{cases}
\end{equation}
where $a>0$ is a parameter. The small data global well-posedness result in \cite{T.M.F} is for (\ref{old}) with d=2 and $a>0$. The damping term plays a crucial role in the proof of their result and can not be removed. It was used to form a damping term in the equation of a combined quantity. \cite{T.M.J} examined (\ref{old}) with $d=3$ and $a>0$ and obtained the small data global well-posedness for any sufficiently small data $(u_0, \tau_0) \in H^3$. The damping term $a\tau$ in (\ref{old}) is also necessary for their result.

\vskip .1in
The velocity equation in (\ref{OBA}) is a forced Euler equation. As it is known, the $H^s$-norm of a solution of the Euler equation may grow in time, even perhaps at a double exponential rate (see, e.g., \cite{Den, Kis, Zla}). The Oldroyd-B system discussed has a dissipative structure, and a  main reason why  Theorem \ref{main} holds is a key observation on the linearized
system of (\ref{OBA}). Clearly, any solution $(u, \tau)$ of (\ref{OBA}) also solves
\begin{equation}\label{wave1}
\begin{cases}
\partial_t u+ \mathbb P (u\cdot\nabla u) =\mathbb P\nabla\cdot\tau,\ \ \ \ x\in\mathbb{R}^d,\ \ t>0,\\
\partial_t \mathbb P\nabla\cdot\tau + \mathbb P\nabla\cdot (u\cdot\nabla\tau) +\eta(-\Delta)^\beta\mathbb P\nabla\cdot\tau +\mathbb P\nabla\cdot Q(\tau,\nabla u)= \frac12 \Delta u,  \\
\nabla\cdot u=0,
\end{cases}
\end{equation}
where $\mathbb P$ denotes the Leray projection onto divergence-free
vector fields. The corresponding linearized system is given by
\begin{equation*}\label{wave2}
\begin{cases}
\partial_t u  =\mathbb P\nabla\cdot\tau,\\
\partial_t \mathbb P\nabla\cdot\tau +\eta(-\Delta)^\beta\mathbb P\nabla\cdot\tau = \frac12 \Delta u,  \\
\nabla\cdot u=0,
\end{cases}
\end{equation*}
which can be easily reduced to a system of decoupled wave type equations
\begin{equation}\label{wave3}
\begin{cases}
\partial_{tt} u + \eta (-\Delta)^\beta \partial_t u - \frac12 \Delta u=0,\\
\partial_{tt} (\mathbb P\nabla\cdot\tau) +\eta(-\Delta)^\beta \partial_t (\mathbb P\nabla\cdot\tau) - \frac12 \Delta (\mathbb P\nabla\cdot\tau) =0,  \\
\nabla\cdot u=0.
\end{cases}
\end{equation}
The structure in (\ref{wave3}) reveals that there are both dissipative and dispersive effects on $u$ in (\ref{OBA}). We remark that the Oldroyd-B model with only velocity dissipation share
a similar structure and has been shown by Yi Zhu to possess a unique global small solution \cite{zhuyi}. In order to prove the existence part of Theorem \ref{main}, we construct a suitable Lyapunov functional that
incorporate these effects.  We set the Lyapunov functional to be
$$
L(t) = \|u(t)\|_{H^s(\mathbb R^d)}^2 + \|\tau(t)\|_{H^s(\mathbb R^d)}^2 + 2 k (u(t), \nabla\cdot \tau(t))_{H^{s-\beta}(\mathbb R^d)},
$$
where $(f, g)_{H^\sigma(\mathbb R^d)}$ denotes the inner product in $H^\sigma(\mathbb R^d)$.
When the parameter $k>0$ is sufficiently small and when $\frac12\le \beta\le 1$, we are able to show that, for any $t\ge 0$,
\ben
E(t) &:=& \|u(t)\|_{H^s(\mathbb R^d)}^2 + \|\tau(t)\|_{H^s(\mathbb R^d)}^2
\notag\\
&& + 2 \int_0^t \left( \eta \|\Lambda^\beta \tau(t')\|_{H^s}^2 + \frac{k}{2} \|\nabla u(t')\|_{H^{s-\beta}}^2 \right) dt'\label{ee0}
\een
obeys
\beq \label{ee}
E(t) \le E(0) + C\, E^{\frac32}(t).
\eeq
A bootstrap argument applied to (\ref{ee}) implies that, if $E(0)$ is sufficiently small, namely
$$
E(0) \le \varepsilon
$$
for some suitable $\varepsilon >0$, then $E(t)$ is bounded uniformly for all time $t>0$, or
$$
E(t) \le C\, \varepsilon,
$$
which allows us to establish the global existence of solutions to (\ref{OBA}). In order to prove the uniqueness, we distinguish between two cases, $\beta =1$ and $\frac12\le \beta<1$. When $\beta=1$, the term $Q(\tau, \na u)$ can be bounded directly. When $\frac12\le \beta<1$, one needs to make use of the wave structure to generate a dissipative term in the velocity field in order to deduct a suitable bound for $Q(\tau, \na u)$.

\vskip .1in
The second part of this paper rigorously assesses that the Oldroyd-B system in (\ref{OBA}) is the vanishing viscosity limit of the Oldroyd-B system with
kinematic dissipation
\beq \label{diss}
\begin{cases}
	\partial_t u+u\cdot\nabla u+\nabla p + \nu (-\Delta)^\alpha u=\nabla\cdot\tau,\ \ \ \ x\in\mathbb{R}^d,\ \ t>0,\\
	\partial_t \tau+u\cdot\nabla\tau+\eta(-\Delta)^\beta\tau+Q(\tau,\nabla u)=D(u),  \\
	\nabla\cdot u=0,\\
	u(0,x)=u_0(x),\ \ \tau(0,x)=\tau_0(x),
\end{cases}
\eeq
where $\nu>0$, $\eta>0$, $0\le \alpha \le 1$ and $\frac12 \le \beta \le 1$.
First of all, (\ref{diss}) always possesses a unique global solution when the initial data is sufficiently small.

\begin{theorem} \label{main1}
Consider (\ref{diss}) with
$$
\nu>0, \quad \eta>0, \quad \frac12 \le \beta\le 1 \quad
\mbox{and}  \quad 0\le \alpha \le \min\{1, 3 \beta -1\}.
$$
Assume $(u_0, \tau_0) \in H^s (\mathbb R^d)$ with $s> 1+\frac{d}{2}$.
There exists small number $\varepsilon>0$ (independent of $\nu$) such that, if
$$
\|(u_0, \tau_0)\|_{H^s} \le \varepsilon,
$$
then (\ref{diss}) has a unique global solution $(u^{(\nu)}, \tau^{(\nu)})$
satisfying
\beno
&& u^{(\nu)} \in C([0, \infty); H^s)\cap L^2(0, \infty; H^{s+\alpha})\cap L^2(0, \infty; H^{s+1-\beta});\\
&& \tau^{(\nu)} \in C([0, \infty); H^s)\cap L^2(0, \infty; H^{s+\beta}).
\eeno
In addition, $(u^{(\nu)}, \tau^{(\nu)})$  admits the following bound
that is uniform in time and in $\nu$,
\beq\label{unibound}
\|(u^{(\nu)}(t), \tau^{(\nu)}(t))\|_{H^s} \le C\, \varepsilon,
\eeq
where $C$ is independent of $t$ and  $\nu$.
\end{theorem}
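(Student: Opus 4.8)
The plan is to mirror the proof of Theorem~\ref{main}, treating the kinematic dissipation $\nu(-\Delta)^\alpha u$ as an extra, sign-favorable term and organizing every estimate so that its constants never see $\nu$. For a fixed $\nu>0$ the operator $\nu(-\Delta)^\alpha u$ with $0\le\alpha\le1$ is a benign sub-parabolic perturbation of \eqref{OBA}, so local well-posedness in $H^s$, $s>1+\fr d2$, is standard (Friedrichs mollification together with the energy method) and yields a unique solution on a maximal interval; the whole game is the global-in-time, uniform-in-$\nu$ a priori bound \eqref{unibound}, after which a continuation argument delivers a global solution with the stated regularity.

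For the a priori estimate I reuse the Lyapunov functional $L(t)=\|u\|_{H^s}^2+\|\tau\|_{H^s}^2+2k(u,\na\cdot\tau)_{H^{s-\beta}}$. Pairing the first equation of \eqref{diss} with $u$ and the second with $\tau$ in $H^s$, the linear coupling $(\na\cdot\tau,u)_{H^s}+(D(u),\tau)_{H^s}$ cancels (by $\na\cdot u=0$ and symmetry of $\tau$), while the velocity dissipation deposits the favorable term $+\nu\|\Lambda^\alpha u\|_{H^s}^2$ on the left. Differentiating the cross term and using $\na\cdot D(u)=\fr12\Delta u$ produces, exactly as in the derivation of \eqref{wave3}, the coercive quantity $-\fr12\|\na u\|_{H^{s-\beta}}^2$ together with $+\|\mathbb P\na\cdot\tau\|_{H^{s-\beta}}^2$, the linear stress term $-\eta(u,(-\Delta)^\beta\na\cdot\tau)_{H^{s-\beta}}$, and one genuinely new contribution arising from $\partial_t u$, namely $-k\nu((-\Delta)^\alpha u,\na\cdot\tau)_{H^{s-\beta}}$. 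Summing, for $k$ small the destabilizing term $k\|\mathbb P\na\cdot\tau\|_{H^{s-\beta}}^2$ is absorbed into $\eta\|\Lambda^\beta\tau\|_{H^s}^2$; the frequency comparison that makes this work is precisely $\beta\ge\fr12$ (so that $(1+|\xi|^2)^{s-\beta}|\xi|^2\lesssim(1+|\xi|^2)^s|\xi|^{2\beta}$), which is also what renders $L$ equivalent to $\|u\|_{H^s}^2+\|\tau\|_{H^s}^2$ via $|2k(u,\na\cdot\tau)_{H^{s-\beta}}|\le k(\|u\|_{H^s}^2+\|\tau\|_{H^s}^2)$.

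The crux, and the place where the hypothesis $\alpha\le\min\{1,3\beta-1\}$ is consumed, is the new cross term $k\nu((-\Delta)^\alpha u,\na\cdot\tau)_{H^{s-\beta}}$. I would estimate it frequency by frequency against the dissipations at hand. Writing its integrand as $k\nu(1+|\xi|^2)^{s-\beta}|\xi|^{2\alpha+1}|\hat u||\hat\tau|$ and applying Young's inequality, splitting the $u$-factor against $\nu\|\Lambda^\alpha u\|_{H^s}^2$ and the $\tau$-factor against $\eta\|\Lambda^\beta\tau\|_{H^s}^2$, the leftover weight on $\tau$ is $k^2\nu(1+|\xi|^2)^{s-2\beta}|\xi|^{2\alpha+2}$, which is dominated by $(1+|\xi|^2)^s|\xi|^{2\beta}$ exactly when $2\alpha+2-6\beta\le0$, i.e. $\alpha\le 3\beta-1$ (the low-frequency range being harmless). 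Hence for $k$ small and $\nu$ bounded (the relevant regime $\nu\to 0$) this term is absorbed into $\fr\nu2\|\Lambda^\alpha u\|_{H^s}^2+\fr\eta4\|\Lambda^\beta\tau\|_{H^s}^2$. After likewise disposing of the linear $\eta$-cross term against $\fr k4\|\na u\|_{H^{s-\beta}}^2$ and $\eta\|\Lambda^\beta\tau\|_{H^s}^2$, I reach $\fr12\fr{d}{dt}L+\fr\nu2\|\Lambda^\alpha u\|_{H^s}^2+\fr\eta4\|\Lambda^\beta\tau\|_{H^s}^2+\fr k4\|\na u\|_{H^{s-\beta}}^2\le(\text{nonlinear})$.

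The nonlinear terms $(u\cdot\na u,u)_{H^s}$, $(u\cdot\na\tau,\tau)_{H^s}$, $(Q(\tau,\na u),\tau)_{H^s}$ and the nonlinear part of the cross term do not involve $\nu$, so their estimates are identical to those in Theorem~\ref{main}: by Kato--Ponce commutator bounds and the $H^s$ algebra property ($s>1+\fr d2$), each is controlled by $C\,E^{1/2}$ times the dissipation, with $E(t)$ as in \eqref{ee0} and with the nonnegative $\int_0^t\nu\|\Lambda^\alpha u\|_{H^s}^2$ simply retained on the left. Integrating in time therefore reproduces $E(t)\le E(0)+C\,E^{3/2}(t)$ of \eqref{ee} with $C$ independent of $\nu$, and the bootstrap quoted after \eqref{ee} gives $E(t)\le C\varepsilon$ for all $t$ once $\|(u_0,\tau_0)\|_{H^s}\le\varepsilon$, which is \eqref{unibound}; the dissipative integrals then yield $\tau\in L^2H^{s+\beta}$ and, from the $\na u$-coercivity, $u\in L^2H^{s+1-\beta}$ (both uniform in $\nu$), while the $\nu$-term gives $u\in L^2H^{s+\alpha}$ for each fixed $\nu$. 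Uniqueness follows as in Theorem~\ref{main}, splitting $\beta=1$ (where $Q$ is bounded directly) from $\fr12\le\beta<1$ (where the wave structure supplies the velocity dissipation needed to control $Q$); the term $\nu(-\Delta)^\alpha u$ only contributes favorably to the difference estimate. The main obstacle is precisely the uniform-in-$\nu$ absorption of $k\nu((-\Delta)^\alpha u,\na\cdot\tau)_{H^{s-\beta}}$, which is what forces $\alpha\le3\beta-1$ and is the only essential new point beyond the proof of Theorem~\ref{main}.
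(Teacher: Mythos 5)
Your proposal is correct and takes essentially the same route as the paper: the same Lyapunov functional $\|u\|_{H^s}^2+\|\tau\|_{H^s}^2+2k(u,\nabla\cdot\tau)_{H^{s-\beta}}$, with the single new cross term $k\nu((-\Delta)^\alpha u,\nabla\cdot\tau)_{H^{s-\beta}}$ absorbed into $\nu\|\Lambda^{\alpha}u\|_{H^s}^2$ and $\eta\|\Lambda^{\beta}\tau\|_{H^s}^2$ by exactly the frequency comparison encoded in $\alpha\le\min\{1,3\beta-1\}$ (the paper phrases it as $2\alpha-3\beta+1\le\alpha$ after bounding $|I_8|\le \nu k\|\Lambda^{2\alpha-3\beta+1}u^{(\nu)}\|_{H^s}\|\Lambda^{\beta}\tau^{(\nu)}\|_{H^s}$). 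All remaining estimates, the bootstrap, and the uniqueness argument are carried over verbatim from Theorem \ref{main}, which is precisely what the paper's proof does.
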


\vskip .1in
In particular, Theorem \ref{main1} holds for the case when $\alpha=1$ and $\beta=1$, namely the standard Laplacian case. We emphasize that $\varepsilon$ in Theorem \ref{main1} is independent of $\nu$. In addition,  the fact that the bound for the solution $(u^{(\nu)}, \tau^{(\nu)})$ in $H^s$ is uniform in terms of $\nu$ plays crucial role in the proof of the following vanishing viscosity limit.
As $\nu \to 0$, (\ref{diss}) converges to (\ref{OBA}) in the sense as stated
in the following theorem.
\begin{theorem}\label{main2}
Assume 	
	$$
	\nu>0, \quad \eta>0, \quad \frac12 \le \beta\le 1 \quad
	\mbox{and}  \quad 0\le \alpha \le \min\{1, 3 \beta -1\}.
	$$
Let $(u_0, \tau_0) \in H^s (\mathbb R^d)$ with $s> 1+\frac{d}{2}$ and $s\ge 2\alpha + 2\beta -1$. Assume that the norm of $(u_0, \tau_0) \in H^s$ is sufficiently small, namely
$$
\|(u_0, \tau_0)\|_{H^s} \le \varepsilon
$$
such that (\ref{OBA}) and (\ref{diss}) each has a unique global solution. Let $(u, \tau)$ and $(u^{(\nu)}, \tau^{(\nu)})$ be the solutions of
(\ref{OBA}) and (\ref{diss}), respectively. Then,
\beq\label{invi1}
\|(u^{(\nu)}(t), \tau^{(\nu)}(t)) - (u(t), \tau(t))\|_{L^2} \le C\, \nu,
\eeq
where $C$ may depend on $t$ and the initial data but is independent of $\nu$.
\end{theorem}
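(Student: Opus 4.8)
The plan is to estimate the difference of the two solutions directly in $L^2$ by an energy method, reusing the structural cancellations and the wave-type dissipation that already drive Theorems \ref{main} and \ref{main1}. Write $\delta u=u^{(\nu)}-u$, $\delta\tau=\tau^{(\nu)}-\tau$, $\delta p=p^{(\nu)}-p$. Subtracting \eqref{OBA} from \eqref{diss} and using that the convective term and $Q$ are bilinear, the difference solves
\begin{equation*}
\begin{cases}
\partial_t\delta u+u\cdot\nabla\delta u+\delta u\cdot\nabla u^{(\nu)}+\nabla\delta p=\nabla\cdot\delta\tau-\nu(-\Delta)^\alpha u^{(\nu)},\\
\partial_t\delta\tau+u\cdot\nabla\delta\tau+\delta u\cdot\nabla\tau^{(\nu)}+\eta(-\Delta)^\beta\delta\tau+Q(\delta\tau,\nabla u^{(\nu)})+Q(\tau,\nabla\delta u)=D(\delta u),\\
\nabla\cdot\delta u=0,
\end{cases}
\end{equation*}
with zero initial data $\delta u(0)=\delta\tau(0)=0$, since \eqref{OBA} and \eqref{diss} share $(u_0,\tau_0)$. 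Here $\nu(-\Delta)^\alpha u^{(\nu)}$ is the only genuine forcing and is of size $O(\nu)$, while every other term is quadratic in the unknowns with a coefficient that is $O(\varepsilon)$ by the uniform-in-$\nu$ bound \eqref{unibound} (and by Theorem \ref{main} for $(u,\tau)$).

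Next I would pair the first equation with $\delta u$ and the second with $\delta\tau$ in $L^2$ and add. The divergence-free transport terms and the pressure term drop, and the Oldroyd-B coupling cancels, $(\nabla\cdot\delta\tau,\delta u)+(D(\delta u),\delta\tau)=0$, because $\delta\tau$ is symmetric and $\delta u$ is divergence free. This gives
\[
\tfrac12\tfrac{d}{dt}\big(\|\delta u\|_{L^2}^2+\|\delta\tau\|_{L^2}^2\big)+\eta\|\Lambda^\beta\delta\tau\|_{L^2}^2=I_1+I_2+I_3+I_4+I_5,
\]
\[
I_1=-(\delta u\cdot\nabla u^{(\nu)},\delta u),\quad I_2=-(\delta u\cdot\nabla\tau^{(\nu)},\delta\tau),\quad I_3=-(Q(\delta\tau,\nabla u^{(\nu)}),\delta\tau),
\]
\[
I_4=-(Q(\tau,\nabla\delta u),\delta\tau),\quad I_5=-\nu((-\Delta)^\alpha u^{(\nu)},\delta u).
\]
The terms $I_1,I_2,I_3$ are bounded by $C\|(\nabla u^{(\nu)},\nabla\tau^{(\nu)})\|_{L^\infty}(\|\delta u\|_{L^2}^2+\|\delta\tau\|_{L^2}^2)\le C\varepsilon(\|\delta u\|_{L^2}^2+\|\delta\tau\|_{L^2}^2)$, using $s>1+\frac d2$ and \eqref{unibound}. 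For $I_5$, since $(-\Delta)^\alpha=\Lambda^{2\alpha}$ and the hypothesis $s\ge 2\alpha+2\beta-1$ together with $\beta\ge\frac12$ forces $2\alpha\le s$, one has $\|(-\Delta)^\alpha u^{(\nu)}\|_{L^2}\le C\|u^{(\nu)}\|_{H^s}\le C\varepsilon$, whence $|I_5|\le C\nu\varepsilon\|\delta u\|_{L^2}$.

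The only dangerous term is $I_4$, which loses one velocity derivative while the limiting system \eqref{OBA} carries no velocity dissipation. I would split exactly as in the uniqueness part of Theorem \ref{main}. When $\beta=1$, integrating by parts moves the derivative onto $\tau\,\delta\tau$ and produces $\|\nabla\delta\tau\|_{L^2}=\|\Lambda\delta\tau\|_{L^2}$, which is precisely the available dissipation, so $|I_4|\le C\varepsilon\|\delta u\|_{L^2}\|\Lambda\delta\tau\|_{L^2}\le\frac\eta4\|\Lambda\delta\tau\|_{L^2}^2+C\varepsilon^2\|\delta u\|_{L^2}^2$. When $\frac12\le\beta<1$ the $\Lambda\delta\tau$ is no longer controlled, so I would augment the energy by the wave-structure cross term $2k(\delta u,\nabla\cdot\delta\tau)_{H^{-\beta}}$, the $L^2$-level analogue of the functional $L(t)$; as in \eqref{wave3} this manufactures a velocity-difference dissipation $\frac k2\|\Lambda^{1-\beta}\delta u\|_{L^2}^2$. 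A fractional Leibniz (Kato–Ponce) estimate then yields $|I_4|\le C\varepsilon\|\Lambda^{1-\beta}\delta u\|_{L^2}\|\Lambda^\beta\delta\tau\|_{L^2}$, and both factors are absorbed by the two dissipation terms via Young's inequality once $k$ and $\varepsilon$ are small. The threshold $s\ge 2\alpha+2\beta-1$ is what lets the fractional bookkeeping in this augmented estimate close, since the cross term forces control of $u^{(\nu)}$ at orders up to $2\alpha+2\beta-1$ through \eqref{unibound}.

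Collecting everything, the augmented $L^2$-energy $\mathcal E(t)\simeq\|\delta u\|_{L^2}^2+\|\delta\tau\|_{L^2}^2$ (the cross term being a small perturbation for $k$ small) obeys a differential inequality $\frac{d}{dt}\mathcal E\le C\varepsilon\,\mathcal E+C\nu\varepsilon\,\mathcal E^{1/2}$. Setting $F=\mathcal E^{1/2}$ reduces this to $\frac{d}{dt}F\le C\varepsilon\,F+C\nu\varepsilon$, and Grönwall's inequality with $F(0)=0$ gives $F(t)\le C\nu(e^{C\varepsilon t}-1)$, that is $\|(\delta u(t),\delta\tau(t))\|_{L^2}\le C(t)\,\nu$, which is \eqref{invi1}. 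The main obstacle throughout is the derivative loss in $I_4$ for $\frac12\le\beta<1$: without the dissipative wave structure of \eqref{wave3} there is no mechanism to recover the velocity-difference regularity in the limit system, and managing the resulting fractional estimates is precisely where the regularity condition $s\ge 2\alpha+2\beta-1$ is needed.
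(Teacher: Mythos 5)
Your overall strategy is the paper's: form the difference system with the single $O(\nu)$ forcing $\nu(-\Delta)^\alpha u^{(\nu)}$, use the cancellation $(\nabla\cdot\delta\tau,\delta u)+(D(\delta u),\delta\tau)=0$, split into the cases $\beta=1$ and $\frac12\le\beta<1$, absorb $Q(\tau,\nabla\delta u)$ into $\eta\|\nabla\delta\tau\|_{L^2}^2$ when $\beta=1$, manufacture velocity dissipation from the wave structure via a cross term when $\beta<1$, and close with Gr\"onwall. The genuine difference is where you place the augmented energy in the second case. The paper lifts the energy to $H^\beta$ and takes the cross term $2k_3(\delta u,\nabla\cdot\delta\tau)$ in $L^2$ (see \eqref{jj8}--\eqref{jj10}), so the dissipation pair is $\eta\|\Lambda^\beta\delta\tau\|^2_{H^\beta}$ (hence control of $\Lambda^{2\beta}\delta\tau$) and $\frac{k_3}{2}\|\nabla\delta u\|_{L^2}^2$, and every error term $K_1,\dots,K_{20}$ lives in nonnegative-order Sobolev spaces, estimable by H\"older, Sobolev embedding and commutator bounds. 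You instead keep the energy at $L^2$ and push the cross term down to $H^{-\beta}$, getting the weaker pair $\eta\|\Lambda^\beta\delta\tau\|_{L^2}^2$ and $\frac{k}{2}\|\Lambda^{1-\beta}\delta u\|_{L^2}^2$. This dual-level scheme is workable, and it even needs less of the hypothesis $s\ge 2\alpha+2\beta-1$ than you claim: in your bookkeeping the forcing terms only require $\|u^{(\nu)}\|_{H^{2\alpha}}$ and $\|u^{(\nu)}\|_{H^{2\alpha+1-2\beta}}$, both controlled by $\|u^{(\nu)}\|_{H^s}$, whereas it is the paper's term $K_1=-\nu(\Lambda^{2\alpha+\beta}u,\Lambda^\beta\delta u)=-\nu(\Lambda^{2\alpha+2\beta-1}u,\Lambda\,\mathcal{R}\delta u)$ that genuinely uses $s\ge 2\alpha+2\beta-1$.

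The gap in your write-up is that the hardest part of the argument is asserted rather than carried out. Once the cross term sits at negative order, the error terms produced by differentiating it in time --- the analogues of the paper's $K_8$ through $K_{15}$, e.g. $(\mathbb{P}\nabla\cdot Q(\tau,\nabla\delta u),\delta u)_{H^{-\beta}}$ and $(\mathbb{P}\nabla\cdot(u\cdot\nabla\delta\tau),\delta u)_{H^{-\beta}}$ --- require product estimates in $H^{-\beta}$, where one must transfer $\beta$ derivatives off $\nabla\delta u$ (or off $\nabla\delta\tau$) onto the coefficient. The Kato--Ponce inequality you invoke is a positive-order statement and does not do this; you need either the duality pairing $H^{-\beta}$--$H^{\beta}$ together with a Sobolev multiplication bound such as $\|fg\|_{H^{\beta}}\lesssim\|f\|_{H^{s-1}}\|g\|_{H^{\beta}}$ (valid since $s-1>\frac d2\ge\beta$), or a Bony paraproduct decomposition. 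You never list or estimate these cross-term errors, which constitute essentially all of the paper's Case 2. A further caution: for the augmented functional to be comparable to $\|\delta u\|_{L^2}^2+\|\delta\tau\|_{L^2}^2$ you must use the \emph{inhomogeneous} $H^{-\beta}$ pairing, via $|(\delta u,\nabla\cdot\delta\tau)_{H^{-\beta}}|\le\|\delta u\|_{L^2}\|\delta\tau\|_{H^{1-2\beta}}\le\|\delta u\|_{L^2}\|\delta\tau\|_{L^2}$ for $\beta\ge\frac12$; with the homogeneous $\dot H^{-\beta}$ pairing (which is what produces the clean dissipation $\|\Lambda^{1-\beta}\delta u\|_{L^2}^2$ you quote) this comparability fails at low frequencies. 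The paper's choice of raising the energy to $H^\beta$ rather than lowering the cross term to $H^{-\beta}$ is exactly what avoids all of these negative-order issues, at the mild price of estimating more terms at higher regularity.
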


\vskip .1in
We remark that small data global solutions of (\ref{OBA}) in critical homogeneous Besov spaces have also been obtained (\cite{WuZhao0}).
Due to its special features, the Oldroyd-B model has recently attracted considerable interests from the community of mathematical fluids. A rich array of results have been established on the well-posedness and closely related problems. Interested readers
can consult some of the references listed here, see, e.g., \cite{Bejaoui, J.-Y.Chemin,Chen2018,Miao, Constantin0, Constantin,Constantin-Sun, T.M.F, T.M.J, Fang-Hieber-Zi,D.Y.Fang,E.Fernandez, C.G1, C.G2, La1,M.Hieber,
	Hu-Lelievre, Lin, P.-L.Lions,Z.Lei, Wan, WuZhao0, WuZhao,Ye,Ye-Xu,Zhai2018,zhuyi, R.Z.Zi}. This list is by no means exhaustive.

\vskip .3in
\section{Proof of Theorem \ref{main}}
\setcounter{equation}{0}
	
This section proves Theorem \ref{main}.

\begin{proof}
The proof is naturally divided into two parts. The first part is
for the existence while the second part is for the uniqueness.

\vskip .1in
To prove the global existence of solutions, it suffices to establish the energy inequality in (\ref{ee}) with $E(t)$ being defined in (\ref{ee0}). The proof of (\ref{ee}) is via energy estimates. We need
to separate the homogeneous part of the $H^s$-norm from the
inhomogeneous part. Due to the equivalence of the norm $\|f\|_{H^s}$ with  $\|f\|_{L^2}+\|\Lambda^s f\|_{L^2}$, we combine the $L^2$-part with the homogeneous $\dot{H}^s$-part. Dotting (\ref{OBA}) by $(u, \tau)$ in $L^2$, integrating by parts and making use of $\na\cdot u =0$, we find
\begin{eqnarray}\label{basic-1}
\frac{1}{2}\frac{d}{dt}(\|u\|^2_{L^2}+\|\tau\|^2_{L^2})+\eta\|\Lambda^\beta\tau\|^2_{L^2}
=-(Q(\tau,\nabla u), \tau),
\end{eqnarray}
where $(f, g)$ denotes the inner product in $L^2(\mathbb R^2)$ and we used
\begin{eqnarray*}
\int_{\mathbb R^2} (u\cdot (\nabla\cdot\tau) + D(u)\cdot\tau)\, \mathrm{d}x=0.
\end{eqnarray*}
Applying $\Lambda^s$ to \eqref{OBA} and dotting by $(\Lambda^s u, \Lambda^s \tau)$, we  obtain
\begin{eqnarray}\label{basic-2}
&&\frac{1}{2}\frac{d}{dt}(\|\Lambda^su\|^2_{L^2}+\|\Lambda^s\tau\|^2_{L^2})+\eta\|\Lambda^{s+\beta}\tau\|^2_{L^2}\nonumber\\
&=&-(\Lambda^s(u\cdot\nabla u), \Lambda^su)-(\Lambda^s(u\cdot\nabla \tau), \Lambda^s\tau)-(\Lambda^sQ(\tau,\nabla u), \Lambda^s\tau),
\end{eqnarray}
where we used
\begin{eqnarray*}
\int_{\mathbb R^2} (\Lambda^su\cdot (\Lambda^s\nabla\cdot\tau) + \Lambda^s D(u)\cdot \Lambda^s\tau) \mathrm{d}x=0.
\end{eqnarray*}
We now make use of (\ref{wave1}) to generate a dissipative term on
the velocity field $u$. It is not difficult to check that
\begin{eqnarray}\label{utaue1}
&&\frac{d}{dt}(u,\nabla\cdot\tau)+\frac{1}{2}\|\nabla u\|_{L^2}^2-\|\mathbb{P}\nabla\cdot\tau\|_{L^2}^2\nonumber\\
&=&-((u\cdot\nabla u),\mathbb{P}\nabla\cdot\tau) -(\mathbb{P}\nabla\cdot(u\cdot\nabla\tau),u)-(\mathbb{P}\nabla\cdot Q(\tau,\nabla u), u)\nonumber\\
&&-\eta((-\Delta)^\beta\mathbb{P}\nabla\cdot\tau,u).
\end{eqnarray}
A similar equality also holds for the $\dot{H}^{s-\beta}$ inner product,
\begin{eqnarray}\label{utaue2}
&&\frac{d}{dt}(\Lambda^{s-\beta}u,\Lambda^{s-\beta}\nabla\cdot\tau)+\frac{1}{2}\|\Lambda^{s-\beta}\nabla u\|_{L^2}^2-\|\Lambda^{s-\beta}\mathbb{P}\nabla\cdot\tau\|_{L^2}^2\nonumber\\
&=&-(\Lambda^{s-\beta}(u\cdot\nabla u),\Lambda^{s-\beta}\mathbb{P}\nabla\cdot\tau) -(\Lambda^{s-\beta}\mathbb{P}\nabla\cdot(u\cdot\nabla\tau),\Lambda^{s-\beta}u)\nonumber\\
&&-(\Lambda^{s-\beta}\mathbb{P}\nabla\cdot Q(\tau,\nabla u),\Lambda^{s-\beta} u)
-\eta(\Lambda^{s-\beta}(-\Delta)^\beta\mathbb{P}\nabla\cdot\tau,\Lambda^{s-\beta}u).
\end{eqnarray}
For a constant $k>0$,  \eqref{basic-1}+\eqref{basic-2}+ $k$\eqref{utaue1}+ $k$\eqref{utaue2} leads to
\begin{eqnarray}\label{basicie-1}
&&\frac{1}{2}\frac{d}{dt}(\|u\|^2_{H^s}+\|\tau\|^2_{H^s}+2k(u,\nabla\cdot\tau)_{H^{s-\beta}})+\eta\|\Lambda^\beta\tau\|^2_{H^s}
\nonumber\\
&&+\frac{k}{2}\|\nabla u\|_{H^{s-\beta}}^2-k\|\mathbb{P}\nabla\cdot\tau\|_{H^{s-\beta}}^2=\sum_{i=1}^7I_i,
\end{eqnarray}
where
\begin{eqnarray*}
&& I_1=-k((u\cdot\nabla u),\mathbb{P}\nabla\cdot\tau)_{H^{s-\beta}},\\
&& I_2=-k(\mathbb{P}\nabla\cdot(u\cdot\nabla\tau),u)_{H^{s-\beta}},\\
&& I_3=-k(\mathbb{P}\nabla\cdot Q(\tau,\nabla u),u)_{H^{s-\beta}}, \\
&& I_4=-k\eta(\Lambda^{2\beta}\mathbb{P}\nabla\cdot \tau, u)_{H^{s-\beta}}, \\
&& I_5=-(\Lambda^s(u\cdot\nabla u),\Lambda^su), \\
&&  I_6=-(\Lambda^s(u\cdot\nabla\tau),\Lambda^s\tau), \\
&& I_7=-(Q(\tau,\nabla u),\tau)_{H^s}.
\end{eqnarray*}
Now we estimate $I_1$ through $I_7$. We use the simple facts that $\mathbb P u = u$ if $u$ is divergence-free,  $\mathbb P$ is bounded by $1$ on $H^s(\mathbb R^d)$ and $(\mathbb P f, g) = (f, \mathbb P g)$.
Thanks to $s>1+\frac{d}{2}$, $\frac{1}{2}\leqslant\beta\leqslant1$ and $\nabla\cdot u=0$, we have
\begin{eqnarray*}
|I_1|&\lesssim&\|u\|_{L^\infty}\|\nabla u\|_{L^2}\|\nabla \tau\|_{L^2}+\|\Lambda^{s-\beta+1} \tau\|_{L^2}\| u\|_{L^\infty}\|\Lambda^{s-\beta+1} u\|_{L^2}\\
&\lesssim& \| u\|_{H^s}\|\nabla u\|_{H^{s-\beta}}\|\Lambda^\beta\tau\|_{H^s}.
\end{eqnarray*}
Due to $\frac{1}{2}\leqslant\beta\leqslant1$ and $\nabla\cdot u=0$, we have, by integration by parts,
\begin{eqnarray*}
|I_2|&\lesssim&\| u\|_{L^\infty}\|\nabla u\|_{L^2}\|\nabla \tau\|_{L^2}\\
&& +\|\Lambda^{s-\beta+1} u\|_{L^2}(\|\Lambda^{s-\beta+1} u\|_{L^2}\|\tau\|_{L^\infty}+\|u\|_{L^\infty}\|\Lambda^{s-\beta+1} \tau\|_{L^2})\\
&\lesssim& \| u\|_{H^s}\|\nabla u\|_{H^{s-\beta}}\|\Lambda^{\beta} \tau\|_{H^s}+\|\nabla u\|_{H^{s-\beta}}^2\|\tau \|_{H^s}.
\end{eqnarray*}
Due to $s>1+\frac{d}{2}$ and $\frac{1}{2}\leqslant\beta\leqslant1$, we have, by integration by parts,
\begin{eqnarray*}
|I_3|&\lesssim& \| \nabla u\|_{L^2}^2\| \tau\|_{L^\infty}+\|\Lambda^{s-\beta+1} u\|_{L^2}(\|\Lambda^{s-\beta+1} u\|_{L^2}\|\tau\|_{L^\infty}+\|\nabla u\|_{L^\infty}\|\Lambda^{s-\beta} \tau\|_{L^2})\\
&\lesssim& \| \tau\|_{H^s}\|\nabla u\|_{H^{s-\beta}}^2+\| u\|_{H^s}\|\nabla u\|_{H^{s-\beta}}\|\Lambda^\beta\tau \|_{H^s}.
\end{eqnarray*}
$I_4$ is bounded by
\begin{eqnarray*}
|I_4|\leqslant k\eta\| \Lambda^\beta \tau\|_{H^s}\| \nabla u\|_{H^{s-\beta}}
\leqslant\frac{\eta}{4}\| \Lambda^\beta\tau\|_{H^s}^2+ k^2\eta\| \nabla u\|_{H^{s-\beta}}^2.
\end{eqnarray*}
By $\nabla\cdot u=0$, $\frac{1}{2}\leqslant\beta\leqslant1$ and $s>1+\frac{d}{2}$, we obtain
\begin{eqnarray*}
|I_5|&=&\left|\int(\Lambda^s(u\cdot\nabla u)-u\cdot\nabla\Lambda^s u)\Lambda^s u\mathrm{d}x\right|\\
&\lesssim& \| \Lambda^s u\|_{L^2}^2\| \nabla u\|_{L^\infty}\lesssim \| u\|_{H^s}\| \nabla u\|_{H^{s-\beta}}^2.
\end{eqnarray*}
Similarly,
\begin{eqnarray*}
|I_6|&=&\left|\int(\Lambda^s(u\cdot\nabla \tau )-u\cdot\nabla\Lambda^s \tau)\Lambda^s \tau\mathrm{d}x\right|\\
&\lesssim& \| \Lambda^s \tau\|_{L^2}(\| \nabla u\|_{L^\infty}\| \Lambda^s \tau\|_{L^2}+\| \Lambda^s u\|_{L^2}\| \nabla \tau\|_{L^\infty})\\
&\lesssim& \| u\|_{H^s}\| \Lambda^\beta\tau\|_{H^{s}}^2.
\end{eqnarray*}
Thanks to $\frac{1}{2}\leqslant\beta\leqslant1$, $s>1+\frac{d}{2}$ and $d=2,3$, we have
\begin{eqnarray*}
|I_7|&=& |(Q(\tau,\nabla u),\tau)+(\Lambda^{s-\beta}Q(\tau,\nabla u),\Lambda^{s+\beta}\tau)|\\
&\lesssim& \| \nabla u\|_{L^2}\| \tau\|_{L^4}^2+\| \Lambda^{s+\beta} \tau\|_{L^2}(\| \Lambda^{s-\beta}\nabla u\|_{L^2}\| \tau\|_{L^\infty}+\| \nabla u\|_{L^\infty}\| \Lambda^{s-\beta}\tau\|_{L^2})\\
&\lesssim& \| \nabla u\|_{L^2}\| \tau\|_{L^2}^{2(1-\frac{d}{4})}\| \nabla\tau\|_{L^2}^{\frac{d}{2}}
+\| \tau\|_{H^s}\| \Lambda^{\beta} \tau\|_{H^s}\| \nabla u\|_{H^{s-\beta}}+\| \Lambda^{\beta} \tau\|_{H^s}^2\| u\|_{H^{s}}\\
&\lesssim& \| \tau\|_{H^s}\| \Lambda^{\beta} \tau\|_{H^s}\| \nabla u\|_{H^{s-\beta}}+\| \Lambda^{\beta} \tau\|_{H^s}^2\| u\|_{H^{s}}.
\end{eqnarray*}
In addition, due to $\frac{1}{2}\leqslant\beta\leqslant1$,
$$
k\|\mathbb{P}\nabla\cdot\tau\|_{H^{s-\beta}}^2\leqslant k\|\Lambda^\beta\tau\|^2_{H^s}.
$$
Inserting the estimates for $I_1$ through $I_7$ into \eqref{basicie-1}, we obtain
\begin{eqnarray}\label{basicie-2}
&&\frac{1}{2}\frac{d}{dt}\left(\|u\|^2_{H^s}+\|\tau\|^2_{H^s}+2k(u,\nabla\cdot\tau)_{H^{s-\beta}}\right)\nonumber\\
&& \qquad+ \left(\frac{3}{4}\eta-k\right)\|\Lambda^\beta\tau\|^2_{H^s}
+ \left(\frac{k}{2}-k^2\eta \right)\|\nabla u\|_{H^{s-\beta}}^2\nonumber\\
&\lesssim& \| u\|_{H^s}\|\nabla u\|_{H^{s-\beta}}\|\Lambda^\beta\tau\|_{H^s}+\|\nabla u\|_{H^{s-\beta}}^2\|\tau \|_{H^s}
+ \| u\|_{H^s}\| \nabla u\|_{H^{s-\beta}}^2\nonumber\\
&&+\| u\|_{H^s}\| \Lambda^\beta\tau\|_{H^{s}}^2+\| \tau\|_{H^s}\| \Lambda^{\beta} \tau\|_{H^s}\| \nabla u\|_{H^{s-\beta}}\nonumber\\
&\lesssim&(\| u\|_{H^s}+\| \tau\|_{H^s})(\| \Lambda^\beta\tau\|_{H^{s}}^2+\| \nabla u\|_{H^{s-\beta}}^2).
\end{eqnarray}
By moving $\Lambda^s$ on $u$, and in view of  $\frac{1}{2}\leqslant\beta\leqslant1$, importantly, we have
\begin{eqnarray}
|2k(u,\nabla\cdot\tau)_{H^{s-\beta}}|&\leqslant& 2k\|u\|_{H^s}\|\tau\|_{H^{s+1-2\beta}}\notag \\&\leqslant& 2c_3k\|u\|_{H^s}\|\tau\|_{H^{s}}\notag\\ &\leqslant&\frac{1}{2}\|u\|^2_{H^s}+2c_3^2k^2\|\tau\|^2_{H^s}.\la{cross}
\end{eqnarray}
Choosing $k$ small enough and integrating \eqref{basicie-2} in time and using (\ref{cross}), we have
\begin{eqnarray*}
&&\sup_t\|u\|^2_{H^s}+\sup_t\|\tau\|^2_{H^s}+ 2\int_0^t(\eta \|\Lambda^\beta\tau\|^2_{H^s}+ \frac{k}{2}\|\nabla u\|_{H^{s-\beta}}^2)\mathrm{d}t'\\
&&\lesssim \|u_0\|^2_{H^s}+\|\tau_0\|^2_{H^s}+(\sup_t\|u\|_{H^s}+\sup_t\|\tau\|_{H^s})\int_0^t(\|\Lambda^\beta\tau\|^2_{H^s}+\|\nabla u\|_{H^{s-\beta}}^2)\mathrm{d}t'.
\end{eqnarray*}
Thus, we have established (\ref{ee}). This concludes the proof for the existence part.

\vskip .1in
We now prove the uniqueness. The term $Q(\tau,\nabla u)$ requires special attention. We split the consideration
into two cases: $\beta=1$ and $\frac12\le \beta < 1$. The uniqueness for the case when $\beta=1$ is direct, but the case when $\frac12\le \beta < 1$ is difficult and has to be dealt with by constructing
suitable energy functional.

\vskip .1in
Case 1: $\beta=1$. Assume $(u_1,\tau_1)$ and $(u_2,\tau_2)$ are two solutions of
\eqref{OBA} with the same initial data. Denote $\delta
u=u_1-u_2,\delta\tau=\tau_1-\tau_2$. Then $(\delta u, \delta \tau)$
satisfies
\begin{equation}\label{cha}
\begin{cases}
\partial_t \delta u= \nabla\cdot \delta\tau-u_1\cdot\nabla \delta u-\delta u\cdot\nabla u_2-\nabla \delta P,\\
\partial_t \delta\tau +u_1\cdot\nabla\delta\tau-\eta\Delta\delta \tau=D(\delta u)-\delta u \cdot\nabla\tau_2-Q(\tau_1,\nabla\delta u)-Q(\delta\tau,\nabla u_2),\\
\nabla\cdot\delta u=0,\\
\delta u(x,0)=0;\ \delta\tau(x,0)=0,
\end{cases}
\end{equation}
where $\delta P$ is the corresponding pressure difference.
Taking the $L^2$ inner product of \eqref{cha} with $(\delta u, \delta \tau)$, we have
\begin{eqnarray*}
&&\frac{1}{2}\frac{d}{dt} (\|\delta u\|_{L^2}^2+\|\delta \tau\|_{L^2}^2)+\eta\|\nabla\delta\tau\|_{L^2}^2\\
&=&-\int\delta u\cdot\nabla u_2\cdot\delta u\mathrm{d}x-\int\delta u\cdot\nabla\tau_2\cdot\delta \tau\mathrm{d}x-\int Q(\tau_1,\nabla\delta u)\cdot\delta \tau\mathrm{d}x\\
&&-\int Q(\delta\tau,\nabla u_2)\cdot\delta \tau\mathrm{d}x
+ \int (\delta u\cdot(\nabla\cdot\delta\tau) + D(\delta u)\cdot\delta\tau)\mathrm{d}x\\
&\leqslant& \|\nabla u_2\|_{L^\infty}\|\delta u\|_{L^2}^2+\|\nabla\tau_2\|_{L^\infty}\|\delta u\|_{L^2}\|\delta \tau\|_{L^2}\\
&& + \,c\,(\|\tau_1\|_{L^\infty}\|\nabla\delta \tau\|_{L^2}+\|\nabla\tau_1\|_{L^\infty}\|\delta \tau\|_{L^2})\|\delta u\|_{L^2}+\|\nabla u_2\|_{L^\infty}\|\delta \tau\|_{L^2}^2\\
&\leqslant& c\,(\|\nabla u_2\|_{L^\infty}+\|\nabla \tau_2\|_{L^\infty}+\|\nabla \tau_1\|_{L^\infty}+\| \tau_1\|_{L^\infty}^2)(\|\delta u\|_{L^2}^2+\|\delta \tau\|_{L^2}^2)+\frac{\eta}{2}\|\nabla\delta\tau\|_{L^2}^2,
\end{eqnarray*}
where we have used the fact that
$$
\int (\delta u\cdot(\nabla\cdot\delta\tau) + D(\delta u)\cdot\delta\tau)\mathrm{d}x =0.
$$
It then follows from Gronwall's inequality that $\delta u=\delta\tau=0$.

\vskip .1in
Case 2: $\frac{1}{2}\leqslant\beta<1$. Assume $(u_1,\tau_1)$ and $(u_2,\tau_2)$ are two solutions of
\eqref{OBA} with the same initial data. Denote $\delta
u=u_1-u_2,\delta\tau=\tau_1-\tau_2$. Then $(\delta u, \delta \tau)$
satisfies
\begin{equation}\label{cha1}
\begin{cases}
\partial_t \delta u = \nabla\cdot \delta\tau-u_1\cdot\nabla \delta u-\delta u\cdot\nabla u_2+\nabla \delta P,\\
\partial_t \delta\tau+u_1\cdot\nabla\delta\tau+\eta\Lambda^{2\beta}\delta \tau=D(\delta u)-\delta u \cdot\nabla\tau_2-Q(\tau_1,\nabla\delta u)-Q(\delta\tau,\nabla u_2),\\
\nabla\cdot\delta u=0,\\
\delta u(x,0)=0;\ \delta\tau(x,0)=0.
\end{cases}
\end{equation}
Dotting (\ref{cha1}) by $(\delta u, \delta\tau)$ yields
\begin{eqnarray}\label{basicuniqueness-1}
\hskip -.3in &&  \frac{1}{2}\frac{d}{dt}(\|\delta u\|^2_{L^2}+\|\delta\tau\|^2_{L^2})+\eta\|\Lambda^\beta\delta\tau\|^2_{L^2}\nonumber\\
\hskip -.3in &=&  -(\delta u\cdot\nabla u_2,\delta u)-(\delta u\cdot\nabla \tau_2,\delta \tau)
-(Q(\tau_1,\nabla\delta u), \delta\tau)-(Q(\delta\tau,\nabla u_2), \delta\tau).
\end{eqnarray}
Applying $\Lambda^\beta$ to \eqref{cha1} and then dotting by $(\Lambda^\beta\delta u, \Lambda^\beta\delta \tau)$ lead to
\begin{eqnarray}\label{basicuniqueness-2}
\hskip -.3in &&\frac{1}{2}\frac{d}{dt}(\|\Lambda^\beta \delta u\|^2_{L^2}+\|\Lambda^\beta\delta\tau\|^2_{L^2})+\eta\|\Lambda^{2\beta}\delta\tau\|^2_{L^2}\nonumber\\
\hskip -.3in &=& -(\Lambda^\beta(u_1\cdot\nabla\delta u), \Lambda^\beta\delta u)-(\Lambda^\beta(\delta u\cdot\nabla u_2), \Lambda^\beta\delta u)-(\Lambda^\beta(u_1\cdot\nabla \delta\tau), \Lambda^\beta\delta\tau)\nonumber\\
\hskip -.3in &&-(\Lambda^\beta(\delta u\cdot\nabla \tau_2), \Lambda^\beta\delta\tau)-(\Lambda^\beta Q(\tau_1,\nabla\delta u), \Lambda^\beta\delta\tau)-(\Lambda^\beta Q(\delta\tau,\nabla u_2), \Lambda^\beta\delta\tau).
\end{eqnarray}
Applying $\mathbb{P}\nabla\cdot $ to the second equation of \eqref{cha1}, we have
\begin{eqnarray}\label{OBAuniquess-2}
&&\partial_ t\mathbb{P}\nabla\cdot\delta\tau+\mathbb{P}\nabla\cdot(u_1\cdot\nabla\delta\tau)+\eta\Lambda^{2\beta}\mathbb{P}\nabla\cdot\delta\tau\nonumber\\
&&=\frac{1}{2}\Delta\delta u-\mathbb{P}\nabla\cdot(\delta u\cdot\nabla\tau_2)-\mathbb{P}\nabla\cdot Q(\tau_1,\nabla\delta u)-\mathbb{P}\nabla\cdot Q(\delta\tau,\nabla u_2).
\end{eqnarray}
Taking the $L^2$ inner product of the first equation of \eqref{cha1} with $\mathbb{P}\nabla\cdot\delta\tau$ and the $L^2$ inner product of \eqref{OBAuniquess-2} with $\delta u$ separately, we have
\begin{eqnarray}\label{utaue1uniqueness}
&&\frac{d}{dt}(\delta u,\nabla\cdot\delta\tau)+\frac{1}{2}\|\nabla \delta u\|_{L^2}^2-\|\mathbb{P}\nabla\cdot\delta\tau\|_{L^2}^2\nonumber\\
&=&-((u_1\cdot\nabla\delta u),\mathbb{P}\nabla\cdot\delta\tau)-((\delta u\cdot\nabla u_2),\mathbb{P}\nabla\cdot\delta\tau) -(\mathbb{P}\nabla\cdot(u_1\cdot\nabla\delta\tau),\delta u)\nonumber\\
&&-(\mathbb{P}\nabla\cdot(\delta u\cdot\nabla\tau_2),\delta u)-(\mathbb{P}\nabla\cdot Q(\tau_1,\nabla\delta u), \delta u)-(\mathbb{P}\nabla\cdot Q(\delta\tau,\nabla u_2), \delta u)\nonumber\\
&&-\eta(\Lambda^{2\beta}\mathbb{P}\nabla\cdot\delta\tau,\delta u).
\end{eqnarray}
For a  positive constant $k_1$ to be determined later,  $\eqref{basicuniqueness-1}+\eqref{basicuniqueness-2}+k_1\eqref{utaue1uniqueness}$ gives
\begin{eqnarray}\label{basicie-11}
&&\frac{1}{2}\frac{d}{dt}(\|\delta u\|^2_{H^\beta}+\|\delta\tau\|^2_{H^\beta}+2k_1(\delta u,\nabla\cdot\delta\tau))+\eta\|\Lambda^\beta\delta\tau\|^2_{H^\beta}
\nonumber\\
&&+\frac{k_1}{2}\|\nabla\delta u\|_{L^2}^2-k_1\|\mathbb{P}\nabla\cdot\delta\tau\|_{L^2}^2=\sum_{i=1}^7I'_i,
\end{eqnarray}
where
\begin{eqnarray*}
&& I'_1=-k_1((u_1\cdot\nabla\delta u),\mathbb{P}\nabla\cdot\delta\tau)-k_1((\delta u\cdot\nabla u_2),\mathbb{P}\nabla\cdot\delta\tau), \\
&& I'_2=-k_1(\mathbb{P}\nabla\cdot(u_1\cdot\nabla\delta\tau),\delta u)
-k_1(\mathbb{P}\nabla\cdot(\delta u\cdot\nabla\tau_2),\delta u),\\
&& I'_3=-k_1(\mathbb{P}\nabla\cdot Q(\tau_1,\nabla\delta u), \delta u)-k_1(\mathbb{P}\nabla\cdot Q(\delta\tau,\nabla u_2), \delta u),\\
&& I'_4=-k_1\eta(\Lambda^{2\beta}\mathbb{P}\nabla\cdot\delta\tau,\delta u), \\
&& I'_5=-(\Lambda^\beta(u_1\cdot\nabla\delta u), \Lambda^\beta\delta u)-((\delta u\cdot\nabla u_2), \delta u)_{H^\beta}, \\
&& I'_6=-(\Lambda^\beta(u_1\cdot\nabla \delta\tau), \Lambda^\beta\delta\tau)-((\delta u\cdot\nabla \tau_2), \delta\tau)_{H^\beta},\\
&& I'_7=-( Q(\tau_1,\nabla\delta u), \delta\tau)_{H^\beta}-( Q(\delta\tau,\nabla u_2), \delta\tau)_{H^\beta}.
\end{eqnarray*}
By H\"{o}lder's and Sobolev's inequalities,
\begin{eqnarray*}
&&|I'_1|\lesssim \|u_1\|_{L^\infty}\|\nabla\delta u\|_{L^2}\|\nabla \delta\tau\|_{L^2}+\|\nabla u_2\|_{L^\infty}\|\delta u\|_{L^2}\|\nabla \delta\tau\|_{L^2}, \\
&&|I'_2|\lesssim \|u_1\|_{L^\infty}\|\nabla\delta u\|_{L^2}\|\nabla \delta\tau\|_{L^2}+\|\nabla \tau_2\|_{L^\infty}\|\nabla\delta u\|_{L^2}\| \delta u\|_{L^2},\\
&&|I'_3|\lesssim\|\tau_1\|_{L^\infty}\|\nabla\delta u\|_{L^2}^2+\|\nabla u_2\|_{L^\infty}\|\nabla\delta u\|_{L^2}\| \delta\tau\|_{L^2},\\
&&
|I'_4|\leqslant\frac{\eta}{4}\|\Lambda^{2\beta}\delta\tau\|^2_{L^2}+k_1^2\eta\|\nabla\delta u\|_{L^2}^2.
\end{eqnarray*}
Since $\na\cdot u_1 =0$, $I_5'$ can be written as
\begin{eqnarray*}
I'_5=-(\Lambda^\beta(u_1\cdot\nabla\delta u) -u_1 \cdot \na \Lambda^\beta\delta u, \Lambda^\beta\delta u)-((\delta u\cdot\nabla u_2), \delta u)_{H^\beta}.
\end{eqnarray*}
By a standard commutator estimate,
\begin{eqnarray*}
|I'_5| &\lesssim& \|\nabla u_1\|_{L^{\frac{d}{2-2\beta}}}\|\Lambda^\beta\delta u\|_{L^{\frac{2d}{d-2+2\beta}}}^2+\|\Lambda^\beta u_1\|_{L^{\frac{d}{1-\beta}}}\|\nabla \delta u\|_{L^2}\|\Lambda^\beta\delta u\|_{L^{\frac{2d}{d-2+2\beta}}}
\\
&&+\|\nabla u_2\|_{L^\infty}\|\delta u\|_{L^2}^2+\|\nabla u_2\|_{L^\infty}\|\Lambda^\beta\delta u\|_{L^2}^2+\|\delta u\|_{L^{\frac{2d}{d-2\beta}}}\|\Lambda^\beta \nabla u_2\|_{L^{\frac{d}{\beta}}}\|\Lambda^\beta\delta u\|_{L^2}\nonumber\\
&\lesssim&(\|\nabla u_1\|_{L^{\frac{d}{2-2\beta}}}+\|\Lambda^\beta u_1\|_{L^{\frac{d}{1-\beta}}})\|\nabla\delta u\|_{L^2}^2
+\|\nabla u_2\|_{L^\infty}\|\delta u\|_{L^2}^2\\
&&+(\|\nabla u_2\|_{L^\infty}+\|\Lambda^\beta \nabla u_2\|_{L^{\frac{d}{\beta}}})\|\Lambda^\beta\delta u\|_{L^2}^2.
\end{eqnarray*}
By H\"{o}lder's inequality,
\begin{eqnarray*}
|I'_6|&\lesssim&\| u_1\|_{L^\infty}\|\nabla\delta \tau\|_{L^2}\|\Lambda^{2\beta}\delta \tau\|_{L^2}\\
&&
+ \|\nabla \tau_2\|_{L^\infty}\|\delta u\|_{L^2}(\|\delta \tau\|_{L^2}
 +\|\Lambda^{2\beta}\delta \tau\|_{L^2}),\\
|I'_7|&\lesssim&\| \tau_1\|_{L^\infty}\|\nabla\delta u\|_{L^2}(\|\delta \tau\|_{L^2}+\|\Lambda^{2\beta}\delta \tau\|_{L^2})\\
&& +\|\nabla u_2\|_{L^\infty}\|\delta \tau\|_{L^2}(\|\delta \tau\|_{L^2}+\|\Lambda^{2\beta}\delta \tau\|_{L^2}).
\end{eqnarray*}
We insert the estimates above for $I'_1$ through $I'_7$ in \eqref{basicie-11}. If the initial data is small enough, namely
\begin{eqnarray*}
\|u_0\|_{H^s}+\|\tau_0\|_{H^s}\leqslant \varepsilon
\end{eqnarray*}
for sufficiently small $\varepsilon>0$, we can choose $k_1$ and $t$ small enough to obtain the desired uniqueness.
This completes the proof of Theorem \ref{main}.
\end{proof}

\vskip .3in
\section{Proof of Theorems \ref{main1} and  \ref{main2}}
\setcounter{equation}{0}

This section proves Theorems \ref{main1} and \ref{main2}.

\begin{proof}[Proof of Theorem \ref{main1}]
The proof of Theorem \ref{main1} is very close to that for Theorem \ref{main}. We shall omit most of the details but to point out the differences. The differences are due to the extra term
$\nu (-\Delta)^\alpha u^{(\nu)}$. (\ref{basicie-1}) would now contain
two extra terms and is given by
\begin{eqnarray*}
&&\frac{1}{2}\frac{d}{dt}(\|u^{(\nu)}\|^2_{H^s}+\|\tau^{(\nu)}\|^2_{H^s}
+2k(u^{(\nu)},\nabla\cdot\tau^{(\nu)})_{H^{s-\beta}})+\eta\|\Lambda^\beta\tau^{(\nu)}\|^2_{H^s} + \nu \|\Lambda^\alpha u^{(\nu)}\|^2_{H^s}
\nonumber\\
&&+\frac{k}{2}\|\nabla u^{(\nu)}\|_{H^{s-\beta}}^2-k\|\mathbb{P}\nabla\cdot\tau^{(\nu)}\|_{H^{s-\beta}}^2=\sum_{i=1}^8I_i,
\end{eqnarray*}
where $I_1$ through $I_7$ are the same as before, and $I_8$ is given by
$$
I_8 = \nu \,k\, ((-\Delta)^\alpha u^{(\nu)}, \nabla\cdot\tau^{(\nu)})_{H^{s-\beta}}.
$$
The estimates for $I_1$ through $I_7$ are the same as before and $I_8$ can be bounded by
$$
|I_8| \le \nu k\|\Lambda^{2\alpha -3\beta +1} u^{(\nu)}\|_{H^s} \, \|\Lambda^\beta \tau^{(\nu)}\|_{H^s}.
$$	
When $\alpha \le  \min\{1, 3 \beta -1\}$, we have $2\alpha -3\beta +1 \le \alpha$ and
$$
|I_8| \le \frac{\nu}{2} \|\Lambda^{\alpha} u^{(\nu)}\|^2_{H^s} + \frac{\nu k^2}{2} \|\Lambda^\beta \tau^{(\nu)}\|^2_{H^s}.
$$
The rest of the proof is almost identical to that for Theorem \ref{main}. The crucial fact that the bound for $(u^{(\nu)},\tau^{(\nu)})$ in $H^s$ obtained from this process is uniform in $\nu$.  We omit further details.
\end{proof}

\vskip .1in
We now turn to the proof of Theorem \ref{main2}.
\begin{proof}[Proof of Theorem \ref{main2}]
	We distinguish between two cases: Case I: $\beta=1$ and Case II: $\frac12 \le \beta< 1$. The first case is relatively easy while the second case is more delicate.  The fact that the bound for the solution $(u^{(\nu)}, \tau^{(\nu)})$ in $H^s$ is uniform in terms of $\nu$ plays crucial role in the proof.
	
	\vskip .1in
	Case I: $\beta=1$. The difference $(\delta u, \delta \tau)$ with
	$$
	\delta
	u=u^{(\nu)}-u,\quad \delta\tau=\tau^{(\nu)}-\tau
	$$
	satisfies
	\begin{equation}\label{good}
	\begin{cases}
	\partial_t \delta u + u^{(\nu)}\cdot\na \delta u +  \nu \,(-\Delta)^\alpha \delta u=-\nu \,(-\Delta)^\alpha u +  \nabla\cdot \delta\tau-\delta u\cdot\nabla u-\nabla \delta P,\\
	\partial_t \delta\tau +u^{(\nu)}\cdot\nabla\delta\tau-\eta\Delta\delta \tau=D(\delta u)-\delta u \cdot\nabla\tau-Q(\tau, \na\delta u) -Q(\delta \tau,\nabla u^{(\nu)}),\\
	\nabla\cdot\delta u=0,\\
	\delta u(x,0)=0;\ \delta\tau(x,0)=0,
	\end{cases}
	\end{equation}
	where $\delta P$ is the corresponding pressure difference.
	Taking the $L^2$ inner product of \eqref{good} with $(\delta u, \delta \tau)$, we have
	\begin{eqnarray*}
		&&\frac{1}{2}\frac{d}{dt} (\|\delta u\|_{L^2}^2+\|\delta \tau\|_{L^2}^2)+\nu \|\Lambda^\alpha \delta u\|_{L^2}^2 +  \eta\|\nabla\delta\tau\|_{L^2}^2\\
		&=&-\nu\,\int(-\Delta)^\alpha u\cdot \delta u\,dx -\int\delta u\cdot\nabla u\cdot\delta u \,\mathrm{d}x-\int\delta u\cdot\nabla\tau\cdot\delta \tau\,\mathrm{d}x\\
		&&-\int Q(\tau,\nabla\delta u)\cdot\delta \tau\,\mathrm{d}x
		-\int Q(\delta\tau,\nabla u^{(\nu)})\cdot\delta \tau\,\mathrm{d}x
		\\
		&\leqslant& \nu \|u\|_{H^{2\alpha}}\, \|\delta u\|_{L^2} +
		\|\nabla u\|_{L^\infty}\|\delta u\|_{L^2}^2+\|\nabla\tau\|_{L^\infty}\|\delta u\|_{L^2}\|\delta \tau\|_{L^2}\\
		&& + \,c\,(\|\tau\|_{L^\infty}\|\nabla\delta \tau\|_{L^2}+\|\nabla\tau\|_{L^\infty}\|\delta \tau\|_{L^2})\|\delta u\|_{L^2}+\|\nabla u^{(\nu)}\|_{L^\infty}\|\delta \tau\|_{L^2}^2\\
		&\leqslant& \nu^2 \|u\|_{H^{2\alpha}}^2 + \frac{\eta}{2}\|\nabla\delta\tau\|_{L^2}^2\\
		&& + C\, (1 + \| u\|_{H^s}+\|\tau\|_{H^s}+\|u^{(\nu)}\|_{H^s}+\| \tau\|_{H^{s-1}}^2)(\|\delta u\|_{L^2}^2+\|\delta \tau\|_{L^2}^2).
	\end{eqnarray*}
	Here we have used the fact that
	$$
	\int (\delta u\cdot(\nabla\cdot\delta\tau) + D(\delta u)\cdot\delta\tau)\mathrm{d}x =0.
	$$
	(\ref{invi1}) then follows from Gronwall's inequality and the uniform bound (in $\nu$) for $\|\tau^{(\nu)}\|_{H^s}$.
	
	\vskip .1in
	Case 2: $\frac{1}{2}\leqslant\beta<1$. The difference $(\delta u, \delta \tau)$
	satisfies
	\begin{equation}\label{good2}
	\begin{cases}
	\partial_t \delta u + u^{(\nu)}\cdot\na \delta u +  \nu \,(-\Delta)^\alpha \delta u=-\nu \,(-\Delta)^\alpha u +  \nabla\cdot \delta\tau-\delta u\cdot\nabla u-\nabla \delta P,\\
	\partial_t \delta\tau +u^{(\nu)}\cdot\nabla\delta\tau + \eta(-\Delta)^\beta
	\delta \tau=D(\delta u)-\delta u \cdot\nabla\tau-Q(\tau, \na\delta u) -Q(\delta \tau,\nabla u^{(\nu)}),\\
	\nabla\cdot\delta u=0,\\
	\delta u(x,0)=0;\ \delta\tau(x,0)=0.
	\end{cases}
	\end{equation}

Dotting (\ref{good2}) by $(\delta u, \delta\tau)$ yields
    \begin{eqnarray*}
	\hskip -.3in &&\frac{1}{2}\frac{d}{dt}(\| \delta u\|^2_{L^2}+\|\delta\tau\|^2_{L^2})+\eta\|\Lambda^{\beta}\delta\tau\|^2_{L^2} + \nu \|\Lambda^{\alpha} \delta u\|^2_{L^2}\nonumber\\
	\hskip -.3in &=& - \nu (\Lambda^{2\alpha } u,  \delta u) -(\delta u\cdot\nabla u, \delta u)-(\delta u\cdot\nabla \tau, \delta\tau)\nonumber\\
	\hskip -.3in &&-(Q(\tau,\nabla\delta u), \delta\tau)-( Q(\delta\tau,\nabla u^{(\nu)}), \delta\tau).
	\end{eqnarray*}

	Applying $\Lambda^\beta$ to \eqref{good2} and then dotting by $(\Lambda^\beta\delta u, \Lambda^\beta\delta \tau)$ lead to
	\begin{eqnarray}\label{jj8}
	\hskip -.3in &&\frac{1}{2}\frac{d}{dt}(\|\Lambda^\beta \delta u\|^2_{L^2}+\|\Lambda^\beta\delta\tau\|^2_{L^2})+\eta\|\Lambda^{2\beta}\delta\tau\|^2_{L^2} + \nu \|\Lambda^{\alpha+ \beta} \delta u\|^2_{L^2}\nonumber\\
	\hskip -.3in &=& - \nu (\Lambda^{2\alpha +\beta} u, \Lambda^\beta \delta u) -(\Lambda^\beta(u^{(\nu)}\cdot\nabla\delta u), \Lambda^\beta\delta u)-(\Lambda^\beta(\delta u\cdot\nabla u), \Lambda^\beta\delta u)\nonumber\\
	\hskip -.3in &&-(\Lambda^\beta(u^{(\nu)}\cdot\nabla \delta\tau), \Lambda^\beta\delta\tau)-(\Lambda^\beta(\delta u\cdot\nabla \tau), \Lambda^\beta\delta\tau)-(\Lambda^\beta Q(\tau,\nabla\delta u), \Lambda^\beta\delta\tau)\nonumber\\
	\hskip -.3in &&-(\Lambda^\beta Q(\delta\tau,\nabla u^{(\nu)}), \Lambda^\beta\delta\tau).
	\end{eqnarray}

	Applying $\mathbb{P}\nabla\cdot $ to the second equation of \eqref{good2}, we have
	\begin{eqnarray}\label{jj9}
	&&\partial_ t\mathbb{P}\nabla\cdot\delta\tau+\mathbb{P}\nabla\cdot(u^{(\nu)}\cdot\nabla\delta\tau)+\eta\Lambda^{2\beta}\mathbb{P}\nabla\cdot\delta\tau\nonumber\\
	&&=\frac{1}{2}\Delta\delta u-\mathbb{P}\nabla\cdot(\delta u\cdot\nabla\tau)-\mathbb{P}\nabla\cdot Q(\tau,\nabla\delta u)-\mathbb{P}\nabla\cdot Q(\delta\tau,\nabla u^{(\nu)}).
	\end{eqnarray}
	Taking the $L^2$ inner product of the first equation of \eqref{good2} with $\mathbb{P}\nabla\cdot\delta\tau$ and the $L^2$ inner product of \eqref{jj9} with $\delta u$, we have
	\begin{eqnarray}\label{jj11}
	&&\frac{d}{dt}(\delta u,\nabla\cdot\delta\tau)+\frac{1}{2}\|\nabla \delta u\|_{L^2}^2-\|\mathbb{P}\nabla\cdot\delta\tau\|_{L^2}^2\nonumber\\
	&=&-\nu ((-\Delta)^\alpha  u^{(\nu)}, \nabla\cdot\delta\tau) -((u^{(\nu)}\cdot\nabla\delta u),\mathbb{P}\nabla\cdot\delta\tau)-((\delta u\cdot\nabla u),\mathbb{P}\nabla\cdot\delta\tau)\nonumber\\
	&& -(\mathbb{P}\nabla\cdot(u^{(\nu)}\cdot\nabla\delta\tau),\delta u)-(\mathbb{P}\nabla\cdot(\delta u\cdot\nabla\tau),\delta u)-(\mathbb{P}\nabla\cdot Q(\tau,\nabla\delta u), \delta u)\nonumber\\
	&&-(\mathbb{P}\nabla\cdot Q(\delta\tau,\nabla u^{(\nu)}), \delta u)-\eta(\Lambda^{2\beta}\mathbb{P}\nabla\cdot\delta\tau,\delta u).
	\end{eqnarray}
	We choose a  positive constant $k_3$ satisfying, for a suitable constant $C>0$,
	$$
	0<k_3 \le C\, \min\{1, \eta\}.
	$$
	Then $\eqref{jj8}+k_3\eqref{jj11}$ gives
	\begin{eqnarray}\label{jj10}
	&&\frac{1}{2}\frac{d}{dt}(\|\delta u\|^2_{H^\beta}+\|\delta\tau\|^2_{H^\beta}+2k_3(\delta u,\nabla\cdot\delta\tau))+\eta\|\Lambda^\beta\delta\tau\|^2_{H^\beta}
	\nonumber\\
	&& + \nu \|\Lambda^{\alpha} \delta u\|^2_{H^\beta} +\frac{k_3}{2} \|\nabla\delta u\|_{L^2}^2-k_3\|\mathbb{P}\nabla\cdot\delta\tau\|_{L^2}^2
	=\sum_{i=1}^{20} K_i,
	\end{eqnarray}
	where
\begin{eqnarray*}
	&& K_1 = - \nu (\Lambda^{2\alpha +\beta} u, \Lambda^\beta \delta u), \qquad  K_2 = -(\Lambda^\beta(u^{(\nu)}\cdot\nabla\delta u), \Lambda^\beta\delta u),\\
	&&
	K_3 = -(\Lambda^\beta(\delta u\cdot\nabla u), \Lambda^\beta\delta u), \qquad K_4 = -(\Lambda^\beta(u^{(\nu)}\cdot\nabla \delta\tau), \Lambda^\beta\delta\tau), \\
	&&
	K_5 = -(\Lambda^\beta(\delta u\cdot\nabla \tau), \Lambda^\beta\delta\tau), \qquad
	K_6 = -(\Lambda^\beta Q(\tau,\nabla\delta u), \Lambda^\beta\delta\tau),\\
	&& K_7 =
	-(\Lambda^\beta Q(\delta\tau,\nabla u^{(\nu)}), \Lambda^\beta\delta\tau), \qquad
	K_8 = -k_3\,\nu ((-\Delta)^\alpha  u^{(\nu)}, \nabla\cdot\delta\tau),\\
	&& K_9 = -k_3 ((u^{(\nu)}\cdot\nabla\delta u),\mathbb{P}\nabla\cdot\delta\tau), \qquad
	K_{10} = - k_3 ((\delta u\cdot\nabla u),\mathbb{P}\nabla\cdot\delta\tau),\\
	&& K_{11} = -k_3 (\mathbb{P}\nabla\cdot(u^{(\nu)}\cdot\nabla\delta\tau),\delta u), \qquad K_{12} = - k_3 (\mathbb{P}\nabla\cdot(\delta u\cdot\nabla\tau),\delta u),\\
	&& K_{13} = - k_3 (\mathbb{P}\nabla\cdot Q(\tau,\nabla\delta u), \delta u), \qquad  K_{14} = -k_3 (\mathbb{P}\nabla\cdot Q(\delta\tau,\nabla u^{(\nu)}), \delta u),\\
	&& K_{15} =-k_3 \eta(\Lambda^{2\beta}\mathbb{P}\nabla\cdot\delta\tau,\delta u),\qquad
       K_{16}=- \nu (\Lambda^{2\alpha } u,  \delta u),\\ 
    && K_{17}=-(\delta u\cdot\nabla u, \delta u),\qquad
       K_{18}=-(\delta u\cdot\nabla \tau, \delta\tau),\\
	&& K_{19}=-(Q(\tau,\nabla\delta u), \delta\tau),\qquad
       K_{20}-( Q(\delta\tau,\nabla u^{(\nu)}), \delta\tau).
	\end{eqnarray*}

The terms above can be bounded as follows. All the constants in the estimates are independent of $\nu$. By H\"{o}lder's inequality,
\beno
|K_1| \le \nu ^2 \|\Lambda^{2\alpha +\beta} u\|_{L^2}^2 + C\, \|\delta u\|_{H^\beta}^2.
\eeno
Due to $\na\cdot u^{(\nu)}=0$ and by a standard commutator estimate,
\beno
|K_2| &\le& C\, \|u^{(\nu)}\|_{H^s} \,\|\delta u\|_{H^\beta}^2 + C\, \|u^{(\nu)}\|_{H^s} \,\|\na \delta u\|_{L^2}
\|\Lambda^\beta \delta u\|_{L^2} \\
&\le& \frac{k_3}{16} \|\nabla\delta u\|_{L^2}^2 + C\, (1+ k_3^{-1}\|u^{(\nu)}\|_{H^s}) \, \|u^{(\nu)}\|_{H^s} \,\|\delta u\|_{H^\beta}^2.
\eeno
Clearly, for $q_1$ and $q_2$ satisfying $\frac1{q_1} =\frac12 - \frac{\beta}{d}$ and $\frac1{q_2} = \frac12 -\frac1{q_1}$,
\beno
|K_3| &\le& C\, \|u\|_{H^s} \, \|\delta u\|_{H^\beta}^2
+ \|\delta u\|_{L^{q_1}} \,\|\Lambda^\beta \na u\|_{L^{q_2}}\, \|\Lambda^\beta \delta u\|_{L^2}\\
&\le& C\,  \|u\|_{H^s} \, \|\delta u\|_{H^\beta}^2.
\eeno
By a commutator estimate,
\beno
|K_4| &\le& C\, \|\Lambda^\beta u^{(\nu)}\|_{L^\infty}\, \|\na \delta \tau\|_{L^2}\, \|\Lambda^\beta \delta \tau\|_{L^2} + C\,\|\na u^{(\nu)}\|_{L^\infty} \|\Lambda^\beta \delta \tau\|_{L^2}^2\\
&\le& \frac{\eta}{16} \|\Lambda^\beta\delta\tau\|^2_{H^\beta}
+ C\, (\eta^{-1}\|u^{(\nu)}\|_{H^s}^2 + \|u^{(\nu)}\|_{H^s}) \|\delta \tau\|_{H^\beta}^2.
\eeno
$K_5$ can be similarly bounded as $K_3$,
$$
|K_5| \le C\, \|\tau\|_{H^s} \, (\|\delta u\|_{H^\beta}^2 + \|\delta \tau\|_{H^\beta}^2).
$$
By H\"{o}lder's inequality,
\beno
|K_6| &\le& \|\Lambda^{2\beta} \delta\tau\|_{L^2} \, \|\tau\|_{L^\infty}\,
\|\na \delta u\|_{L^2} \\
&\le& \frac{\eta}{16} \|\Lambda^\beta\delta\tau\|^2_{H^\beta} + C \, \eta^{-1} \|\tau\|^2_{H^{s-1}}\, \|\na \delta u\|_{L^2}^2\\
&\le& \frac{\eta}{16} \|\Lambda^\beta\delta\tau\|^2_{H^\beta} + \frac{k_3}{16} \|\na \delta u\|_{L^2}^2,
\eeno
where we have used the smallness of the solution
$$
C\, \eta^{-1} \|\tau\|_{H^{s-1}}^2 \le C\, \eta^{-1}\, \varepsilon^2 \le \frac{k_3}{16}.
$$
By H\"{o}lder's inequality,
\beno
|K_7| &\le& \|\Lambda^{2\beta} \delta \tau\|_{L^2} \,\|\na u^{(\nu)}\|_{L^\infty}\, \|\delta \tau \|_{L^2}\\
 &\le& \frac{\eta}{16} \|\Lambda^\beta\delta\tau\|^2_{H^\beta} + C \, \eta^{-1} \|u^{(\nu)}\|^2_{H^s}\, \|\delta \tau \|_{L^2}^2.
\eeno
Clearly,
\beno
|K_8| &\le&  k_3 \nu \|\Lambda^{2\alpha + 1 -\beta} u^{(\nu)}\|_{L^2} \,
\|\Lambda^\beta \delta \tau\|_{L^2}\\
&\le& \nu^2  \|\Lambda^{2\alpha + 1 -\beta} u^{(\nu)}\|_{L^2}^2 + C\, k_3^2\,
\| \delta \tau\|_{H^\beta}^2.
\eeno
$K_9$ can be similarly handled as $K_6$,
\beno
|K_9| &\le&  k_3 \,\|u^{(\nu)} \|_{L^\infty}\, \|\na \delta u\|_{L^2}\,
\|\na \delta \tau\|_{L^2} \\
&\le& \frac{\eta}{16} \|\Lambda^\beta\delta\tau\|^2_{H^\beta} + C\, \eta^{-1}
k_3^2\, \|u^{(\nu)}\|^2_{H^{s-1}}\, \|\na \delta u\|^2_{L^2}\\
&\le&  \frac{\eta}{16} \|\Lambda^\beta\delta\tau\|^2_{H^\beta} + \frac{k_3}{16} \|\na \delta u\|_{L^2}^2,
\eeno
where we have used the smallness of the solution
$$
C\, k_3 \eta^{-1} \|u^{(\nu)}\|_{H^{s-1}}^2 \le C\, k_3 \eta^{-1}\, \varepsilon^2 \le \frac{1}{16}.
$$
We emphasize that $\|u^{(\nu)}\|_{H^{s}} \le C\, \varepsilon$ with $\varepsilon$ independent of $\nu$, as stated in Theorem \ref{main1}. For $\beta\ge \frac12$,
\beno
|K_{10}| &\le&  k_3 \, \|\delta u\|_{L^2} \,\|\na u\|_{L^\infty}\, \|\na\delta \tau\|_{L^2} \\
&\le& \frac{\eta}{16} \|\Lambda^\beta\delta\tau\|^2_{H^\beta} + C\, k_3^2\, \|u\|_{H^s}^2\, \|\delta u\|_{L^2}^2.
\eeno
$K_{11}$ admits the same bound as $K_9$,
\beno
|K_{11}| &\le&  k_3\, \|u^{(\nu)}\|_{L^\infty}\,  \|\na \delta u\|_{L^2}\,
\|\na \delta \tau\|_{L^2} \\
&\le&  \frac{\eta}{16} \|\Lambda^\beta\delta\tau\|^2_{H^\beta} + \frac{k_3}{16} \|\na \delta u\|_{L^2}^2.
\eeno
$K_{12}$ can be bounded directly,
\beno
|K_{12}| &\le&  k_3\, \|\delta u\|_{L^2}\,\|\na \tau\|_{L^\infty}\, \|\na \delta u\|_{L^2} \\
 &\le& \frac{k_3}{16} \|\na \delta u\|_{L^2}^2 + C\, k_3 \|\tau\|_{H^s}^2\,\|\delta u\|_{L^2}^2.
\eeno
We use the smallness of the solution to bound $K_{13}$,
$$
|K_{13}| \le C\, k_3 \, \|\tau\|_{L^\infty}\, \|\na \delta u\|_{L^2}^2 \le \frac{k_3}{16} \|\na \delta u\|_{L^2}^2,
$$
where we have used
$$
C\, \|\tau\|_{L^\infty} \le C\, \|\tau\|_{H^{s-1}} \le C\, \varepsilon \le \frac1{16}.
$$
$K_{14}$ is bounded similarly as $K_{12}$,
\beno
|K_{14}| &\le&  k_3\, \|\na \delta u\|_{L^2}\,
\|\delta \tau\|_{L^2}\, \|\na u^{(\nu)}\|_{L^\infty}\\
&\le&  \frac{k_3}{16} \|\na \delta u\|_{L^2}^2 + C\, k_3 \|u^{(\nu)}\|_{H^s}^2\,\|\delta \tau\|_{L^2}^2.
\eeno
\beno
|K_{15}| &\le&  k_3 \eta \, \|\Lambda^{2\beta} \delta \tau\|_{L^2} \,\|\na \delta u\|_{L^2} \\
&\le& \frac{\eta}{16} \|\Lambda^\beta\delta\tau\|^2_{H^\beta} + C\, k_3^2 \eta^{-1} \|\na \delta u\|_{L^2}^2\\
&\le& \frac{\eta}{16} \|\Lambda^\beta\delta\tau\|^2_{H^\beta} +  \frac{k_3}{16} \|\na \delta u\|_{L^2}^2.
\eeno
In addition, it is easy to obtain the following estimates
\beno
|K_{16}| \le \nu ^2 \|\Lambda^{2\alpha} u\|_{L^2}^2 + C\, \|\delta u\|_{L^2}^2,
\eeno
\beno
|K_{17}| &\le& C\,  \|u\|_{H^s} \, \|\delta u\|_{L^2}^2,
\eeno
\beno
|K_{18}| \le C\,  \|\tau\|_{H^s} \, (\|\delta u\|_{L^2}^2+\|\delta \tau\|_{L^2}^2),
\eeno
\beno
|K_{19}| \le C \|\tau\|^2_{H^s}\|\delta\tau\|^2_{L^2} + \frac{k_3}{16} \|\na \delta u\|_{L^2}^2,
\eeno
\beno
|K_{20}| \le C \|u^{(\nu)}\|_{H^s}\|\delta\tau\|^2_{L^2}.
\eeno
Inserting the bounds for $K_1$ through $K_{20}$ above in (\ref{jj10}), we find
\beno
&& \frac{d}{dt}(\|\delta u\|^2_{H^\beta}+\|\delta\tau\|^2_{H^\beta} + 2k_3(\delta u,\nabla\cdot\delta\tau)) \\
&& \qquad + 2 \nu \|\Lambda^{\alpha} \delta u\|^2_{H^\beta} +\frac{\eta}{4} \|\Lambda^\beta\delta\tau\|^2_{H^\beta} +
\frac{k_3}{4} \|\nabla\delta u\|_{L^2}^2 \\
&& \le  C(1 + \|u\|_{H^s}^2 + \|u^{(\nu)}\|_{H^s}^2 + \|\tau\|_{H^s}^2) (\|\delta u\|^2_{H^\beta}+\|\delta\tau\|^2_{H^\beta})\\
&&\quad  + C\, \nu^2 (\|u\|_{H^s}^2 + \|u^{(\nu)}\|_{H^s}^2).
\eeno
Choosing $k_3\le \frac12$, applying Gronwall's inequality and using the fact that $\|u^{(\nu)}\|_{H^s}$ is bounded uniformly in $\nu$ (see (\ref{unibound})), we obtain (\ref{invi1}). This completes the proof of Theorem \ref{main2}.
\end{proof}

\vskip .2in 
\textbf{Acknowledgments}.
The work of PC was partially supported by NSF grant DMS-1713985 and by the Simons Center for Hidden symmetries and Fusion Energy. 	The work of JW was partially supported by NSF 
grant DMS-1624146 and the AT\&T Foundation at Oklahoma State University. The work of JZ was partially supported by the National Natural Science Foundation of China (No.11901165). The work of YZ was partially supported by the National Natural Science Foundation of China (No. 11801175).

\vskip .3in


\begin{thebibliography}{99}
	
\bibitem{Bejaoui}O. Bejaoui, M.  Majdoub, \emph{Global weak solutions for some Oldroyd models}, J. Differential Equations \textbf{254} (2013), 660-685	
	
\bibitem{R.B.Bird}
R.B. Bird, C.F. Curtiss, R.C. Armstrong, O. Hassager, Dynamics of Polymetric Liquids, vol. 1, Fluid Mechanics, 2nd edn., Wiley, New York, (1987).



\bibitem{J.-Y.Chemin}
J.-Y. Chemin, N. Masmoudi, \emph{About lifespan of regular solutions of equations related to viscoelastic fluids}, SIAM J. Math. Anal., \textbf{33}
(2001), 84-112.

\bibitem{Chen2018}
Q.~Chen, X. Hao, \emph{Global well-posedness in the critical Besov spaces for the incompressible Oldroyd-B model without damping mechanism}, arXiv:1810.0617v1 [math.AP]

\bibitem{Miao}
Q. Chen, C. Miao, \emph{Global well-posedness of viscoelastic fluids of Oldroyd type in Besov spaces}, Nonlinear Anal., \textbf{68} (2008), 1928-1939.

%\bibitem{Chen}
%Q. Chen, C. Miao, Z. Zhang, \emph{Global well-posedness for compressible Navier-Stokes equations with highly oscillating initial velocity}, Comm. Pure Appl. Math., \textbf{63} (2010), 1173-1224.

\bibitem{Constantin0} P. Constantin,
\emph{Lagrangian-Eulerian methods for uniqueness in hydrodynamic systems},
Adv. Math. \textbf{278} (2015), 67-102.

\bibitem{Constantin1} P. Constantin,
\emph{Analysis of Hydrodynamic Models} CBMS-NSF Regional Conference Series in Applied Mathematics, {\bf{90}} SIAM (2017).

\bibitem{Constantin}
P. Constantin, M. Kliegl, \emph{Note on global regularity for two dimensional Oldroyd-B fluids stress}, Arch. Ration. Mech. Anal. \textbf{206} (2012), 725-740.

\bibitem{Constantin-Sun} P. Constantin, W. Sun, \emph{Remarks on Oldroyd-B and related complex fluid models}, Commun. Math. Sci. \textbf{10} (2012), 33-73.

\bibitem{Den} S. Denisov,
\emph{Double-exponential growth of the vorticity gradient for the two-dimensional Euler equation},
Proc. AMS, {\bf 143} (2015).

\bibitem{T.M.F}
T.M. Elgindi, F. Rousset, \emph{Global regularity for some Oldroyd-B type models}, Comm. Pure Appl. Math., \textbf{68} (2015), 2005-2021.

\bibitem{T.M.J}
T.M. Elgindi, J.L. Liu, \emph{Global wellposeness to the generalized Oldroyd type models in $\mathbb{R}^3$}. J. Differential Equations, \textbf{259} (2015), 1958-1966.


\bibitem{Fang-Hieber-Zi} D. Fang, M. Hieber, R. Zi, \emph{Global existence results for Oldroyd-B fluids in exterior domains: the case
	of non-small coupling parameters}, Math. Ann., \textbf{357} (2013), 687-709.

\bibitem{D.Y.Fang}
D. Fang, R. Zi, \emph{Global solutions to the Oldroyd-B model with a class of large initial data}, SIAM J.Math. Anal., \textbf{48} (2016), 1054-1084.

\bibitem{E.Fernandez}
E. Fernandez-Cara, F. Guill\'{e}n, R. R. Ortega, \emph{Existence et unicit\'{e} de solution forte locale en temps pour des fluides non newtoniens de type Oldroyd (version $L^s-L^r$)}, C. R. Acad. Sci. Paris S\'{e}r. I Math., \textbf{319} (1994), 411-416.


\bibitem{C.G1}
C. Guillop\'{e}, J.-C. Saut, \emph{Existence results for the flow of viscoelastic fluids with a differential constitutive law}, Nonlinear Anal., \textbf{15} (1990), 849-869.

\bibitem{C.G2}
C. Guillop\'{e}, J.-C. Saut, \emph{Global existence and one-dimensional nonlinear stability of shearing motions of viscoelastic fluids of Oldroyd type}, RAIRO Mod\'{e}l. Math. Anal.Num\'{e}r., \textbf{24} (1990), 369-401.


\bibitem{M.Hieber} M. Hieber, H. Wen, R. Zi, \emph{Optimal decay rates for solutions to the incompressible Oldryod-B model in $\mathbb{R}^3$},   Nonlinearity, {\bf 32} (2019),  833-852.

\bibitem{Hu-Lelievre} D. Hu, T. Lelievre, \emph{New entropy estimates for Oldroyd-B and related models}, Commun. Math. Sci., \textbf{5} (2007),
909-916.


\bibitem{Kis} A. Kiselev and V. Sverak, \emph{Small scale creation for solutions of the incompressible two-dimensional Euler equation},  Ann. of Math., {\bf 180} (2014),  1205-1220.

\bibitem{La} J. La, On diffusive 2D Focker-Planck-Navier-Stokes systems, ArXiv:1804.05168, ARMA (2019). https://doi.org/10.1007/s00205-019-01450-0.

\bibitem{La1} J. La, Global well-posedness of strong solutions of Doi model with large viscous stress, J. Nonlinear Sci. \textbf{29} (2019),  1891-1917.

\bibitem{Lin} F. Lin, C. Liu, P. Zhang, \emph{On hydrodynamics of viscoelastic fluids}, Comm. Pure Appl. Math., \textbf{58} (2005),  1437-1471.

\bibitem{P.-L.Lions}
P.-L. Lions, N. Masmoudi, \emph{Global solutions for some Oldroyd models of non-Newtonian flows}, Chin. Ann. Math. Ser. B, \textbf{21} (2000), 131-146.

\bibitem{Z.Lei}
Z. Lei, N. Masmoudi, Y. Zhou, \emph{Remarks on the blowup criteria for Oldroyd models}, J. Differential Equations, \textbf{248} (2010), 328-341.

\bibitem{J.G. Oldroyd}
J.G. Oldroyd, \emph{Non-Newtonian effects in steady motion of some idealized elastico-viscous liquids}, Proc. Roy. Soc. Edinburgh Sect. A, \textbf{245} (1958), 278-297.

\bibitem{Wan}
R. Wan, \emph{Some new global results to the incompressible Oldroyd-B model}, Z. Angew. Math. Phys., \textbf{70} (2019), Art. 28, 29 pp.

\bibitem{WuZhao0} J. Wu,  J. Zhao, Global regularity for the
generalized incompressible Oldroyd-B model with only stress tensor  dissipation in critical Besov spaces, preprint.

\bibitem{WuZhao} J. Wu, J. Zhao, Global regularity for the
generalized incompressible Oldroyd-B model with only velocity dissipation and no stress tensor damping, preprint.

\bibitem{Ye} Z. Ye, \emph{On the global regularity of the 2D Oldroyd-B-type model}, Ann. Mat. Pura Appl.,\textbf{198} (2019), 465-489.

\bibitem{Ye-Xu} Z. Ye, X. Xu, \emph{Global regularity for the 2D Oldroyd-B model in the corotational case}, Math. Methods
Appl. Sci., \textbf{39} (2016), 3866-3879.

\bibitem{Zhai2018}
X. Zhai, \emph{Global solutions to the n-dimensional incompressible Oldroyd-B model without damping mechanism}, arXiv:1810.08048v2 [math.AP].

\bibitem{zhuyi}
Y. Zhu, \emph{Global small solutions of 3D incompressible Oldroyd-B model without damping mechanism}, Journal of Functional Analysis, \textbf{274} (2017), 2039-2060.

\bibitem{R.Z.Zi}
R. Zi, D. Fang, T. Zhang, \emph{Global solution to the incompressible Oldroyd-B type model in the critical $L^p$ framework: the case of the non-small coupling paramrter}, Arch. Ration. Mech. Anal., \textbf{213} (2014), 651-687.

\bibitem{Zla}
A. Zlatos, \emph{Exponential growth of the vorticity gradient for the Euler equation on the torus},  Adv. Math., {\bf 268} (2015), 396-403.
\end{thebibliography}
\end{document}